\newtheorem{thm}{Theorem}[section]
\newtheorem{cor}[thm]{Corollary}
\newtheorem{lem}[thm]{Lemma}
\theoremstyle{definition}
\newtheorem{xrem}{Remark}
\numberwithin{equation}{section}
\newcommand{\R}{{\mathbb{R}}}
\newcommand{\C}{{\mathbb{C}}}
\newcommand{\tRe}{\textup{Re}}
\renewcommand{\a}{\alpha}
\newcommand{\g}{\gamma}
\newcommand{\G}{\Gamma}
\newcommand{\s}{\sigma}
\begin{document}

\baselineskip=17pt

\title[Explicit Bounds for $L$-Functions on the Edge of the Critical Strip ]{Explicit Bounds for $L$-Functions on the Edge of the Critical Strip}

\author[A. Lumley]{Allysa Lumley}

\address{Department of Mathematics and Statistics\\
York University\\
4700 Keele St\\
Toronto, Ontario\\
M3J 1P3 Canada}
\email{alumley@yorku.ca}

\date{}

\begin{abstract}
Assuming GRH and the Ramanujan-Petersson conjecture we prove explicit bounds for $L(1,f)$ for a large class of $L$-functions $L(s,f)$, which includes $L$-functions attached to automorphic cuspidal forms on $GL(n)$.  The proof  generalizes work of Lamzouri, Li and Soundararajan. Furthermore, the main results improve the classical bounds of Littlewood 
\[(1+o(1))\left(\frac{12e^{\gamma}}{\pi^2}\log\log C(f)\right)^{-d} \leq |L(1,f)|\leq  (1+o(1))\Big(2e^{\gamma}\log\log C(f)\Big)^d,\]
where $C(f)$ is the analytic conductor of $L(s,f)$.
\end{abstract}
\subjclass[2010]{Primary 11M06, 11M36; Secondary 11M41.}

\keywords{L-functions, explicit results.}

\maketitle

\section{Introduction}
In analytic number theory, and increasingly in other surprising places, $L$-functions show up as a tool for describing interesting algebraic and geometric phenomena. In particular, understanding the value of $L$-functions on the $1$-line has a number of  applications. For example, the non-vanishing of the Riemann zeta function for $\zeta(1+it)$, $t\in \R$, proves the celebrated Prime Number Theorem.  Additionally, understanding the value $L(1,\chi)$ for certain Dirichlet characters, provides us with insight to the order of the class group  of imaginary quadratic fields through Dirichlet's Class Number Formula.  
Unconditionally, for any non-trivial Dirichlet character $\chi$ with conductor $q$, we have 
\[\frac1{q^{\epsilon}}\ll |L(1,\chi)|\ll \log q.\]
In fact, we can improve the lower bound to $(\log q)^{-1}$, excluding some exceptional cases related to Landau-Siegel zeros (see \cite[Chapter 14]{Davenport}). Louboutin \cite{Louboutin} proves an explicit upper bound of this shape.
Under the assumption of the Generalized Riemann Hypothesis (GRH), we have the much stronger bounds due to Littlewood \cite{Littlewood}:
\[\frac{\zeta(2)(1+o(1))}{2e^{\gamma}\log\log q}\leq |L(1,\chi)|\leq (2e^{\gamma}+o(1))\log\log q,\] 
where $o(1)$ tends to $0$ as $q\to \infty$. Recently, Lamzouri, Li and Soundararajan gave the following explicit refinement
\begin{thm}\cite[Theorem 1.5 ]{LamLiSound}
\label{mainthmlamlisound}
Asume GRH. Let $q$ be a positive integer and $\chi$ be a primitive character modulo $q$. For $q\ge 10^{10}$ we have 
\[|L(1,\chi)|\le 2e^{\gamma}\left(\log\log q-\log2+\frac12+\frac{1}{\log\log q}\right)\]
and 
\[\frac1{|L(1,\chi)|}\le \frac{12e^{\g}}{\pi^2}\left(\log\log q -\log2+\frac12+\frac{1}{\log\log q}+\frac{14\log\log q}{\log q}\right).\]
\end{thm}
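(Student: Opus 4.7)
\medskip
\noindent\textbf{Proof plan.} I would follow Littlewood's classical strategy, made fully explicit in the manner of Lamzouri--Li--Soundararajan. The starting point is a GRH-conditional approximation
\[
\log L(1,\chi) \;=\; \sum_{n \le x} \frac{\Lambda(n)\chi(n)}{n\log n} \;+\; R(x,\chi),
\]
obtained by integrating $(L'/L)(s,\chi)$ against a suitable kernel and shifting the contour past the $1/2$-line; under GRH the shift is legitimate and the remainder admits an explicit bound of order $(\log q)/\sqrt{x}$, sharpenable by a smooth cutoff. The truncation parameter $x$ will be chosen to be a small multiple of $(\log q)^{2}$, so that $\log x = 2\log\log q$ up to lower-order shifts; this is precisely where the factor $2$ in front of $e^{\gamma}$ comes from, via Mertens' theorem $\prod_{p\le x}(1-1/p)^{-1}\sim e^{\gamma}\log x$.

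\medskip
\noindent\textbf{Upper bound on $|L(1,\chi)|$.} Taking real parts and bounding $\tRe(\chi(p)^{k})\le 1$ prime by prime reduces the problem to the explicit Mertens-type identity
\[
\sum_{p^k\le x}\frac{1}{kp^k} \;=\; \log\log x + \gamma + O\!\left(\tfrac{1}{\log^{2} x}\right),
\]
with fully explicit constants. Setting $x = e(\log q)^{2}/4$ makes $\log x = 2\log\log q - 2\log 2 + 1$, so after exponentiation one obtains precisely
\[
|L(1,\chi)| \;\le\; 2e^{\gamma}\bigl(\log\log q - \log 2 + \tfrac12\bigr) + \text{(small error)},
\]
with the $1/\log\log q$ term absorbing the explicit Mertens remainder and $R(x,\chi)$.

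\medskip
\noindent\textbf{Upper bound on $1/|L(1,\chi)|$.} Here I would complete the inner sum over $k$ for every prime $p\le x$: since $-\sum_{k\ge 1}\chi(p)^{k}/(kp^{k}) = \log(1-\chi(p)/p)$, taking real parts and using the triangle inequality $|1-z|\le 1+|z|$ yields
\[
-\log|L(1,\chi)| \;\le\; \sum_{p\le x}\log(1+1/p) \;+\; R(x,\chi) \;+\; (\text{completion error}).
\]
The arithmetic identity $\prod_{p\le x}(1+1/p) = \prod_{p\le x}(1-1/p^{2})\,\prod_{p\le x}(1-1/p)^{-1}$, together with Mertens and $\prod_{p}(1-1/p^{2})^{-1} = \zeta(2) = \pi^{2}/6$, then gives
\[
\sum_{p\le x}\log(1+1/p) \;=\; \log\log x + \gamma - \log\zeta(2) + O\!\left(\tfrac{1}{\log^{2} x}\right),
\]
and the same choice of $x$ converts this, upon exponentiation, to the factor $12e^{\gamma}/\pi^{2} = 2e^{\gamma}/\zeta(2)$. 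The additional $14\log\log q/\log q$ term in the statement is the explicit form of the GRH-conditional remainder $R(x,\chi)$ at $x$ of size $(\log q)^{2}$.

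\medskip
\noindent\textbf{Main obstacle.} The principal technical difficulty is making $R(x,\chi)$ genuinely explicit and small enough that the announced secondary terms $1/\log\log q$ and $14\log\log q/\log q$ are not swamped. This forces a careful choice of weight in the Perron-type integral, sharp explicit Mertens estimates (of Rosser--Schoenfeld type) for $\sum_{p\le x}1/p$ and $\prod_{p\le x}(1-1/p^{2})$, and a careful optimization of $x$ that simultaneously controls the main Mertens term, the completion error in the $k$-sum, and the GRH remainder.
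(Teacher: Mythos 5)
Your plan is the Lamzouri--Li--Soundararajan strategy that this paper quotes (the theorem is cited, not reproved here) and then generalizes to degree $d$ in Section~4: a GRH-conditional explicit formula from a Perron-type kernel, the choice $x\asymp(\log q)^2/4$ so that $\log x=2\log\log q+O(1)$ produces the $2e^{\gamma}$, extremal configurations $\chi(p)=\pm1$ for the two bounds, and the $\zeta(2)$ factor from $\prod_p(1+1/p)$. The one piece your sketch leaves implicit --- and where most of the explicit work actually lives --- is controlling the zero-sum $\sum_{\rho}|\rho|^{-2}=2|\tRe B(\chi)|$ by a second contour integral (here Lemma~\ref{boundforReBf}, the analogue of Lemma~2.5 of \cite{LamLiSound}), which is what converts your heuristic remainder of size $(\log q)/\sqrt{x}$ into the stated $1/\log\log q$ and $14\log\log q/\log q$ secondary terms.
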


The goal of this paper is to provide explicit upper and lower bounds for a large class of $L$-functions, including $L$-functions attached to automorphic cuspidal forms on $GL(n)$. More precisely,  we bound the quantity $|L(1,f)|$, where  $L$  is a degree $d\ge 1$ $L$-function and $f$ is some arithmetic or geometric object. The results will be valid under the assumption of GRH and the Ramanujan-Petersson conjecture.  Additionally, we improve on the bound that comes from generalizing Littlewood's technique, which under both GRH and Ramanujan-Petersson conjecture provides
$$
(1+o(1))\left(\frac{12e^{\gamma}}{\pi^2}\log\log C(f)\right)^{-d} \leq |L(1,f)|\leq  (1+o(1))\Big(2e^{\gamma}\log\log C(f)\Big)^d,
$$
where $o(1)$ is a quantity that tends to $0$ as $C(f)\to\infty$. Here $C(f)$ denotes the analytic conductor of the $L$-function. A precise definition of $C(f)$ along with what the term $L$-function describes will be provided after another example. Other works discussing explicit bounds for higher degree $L$-functions focus on  bounding $L(\tfrac12,f)$, we refer the reader to \cite{Chandee} for details.   \\

We provide a degree $2$ example before appealing to the precise definitions. 
Let $k,q\ge 1$ be integers and let $\chi$ be a Dirichlet character modulo $q$. 
Take $f$ to be a Hecke cusp form of weight $k$, level $q$, 
and character $\chi$, 
with the following Fourier expansion at the cusp $\infty$,
\[f(z)=\sum_{n\ge 1}\lambda_f(n)n^{(k-1)/2}e(nz), \, e(z)=e^{2\pi i z}.\]
 Then
\[L(s,f)=\prod_p\left(1-\frac{\lambda_f(p)}{p^{s}}+\frac{\chi(p)}{p^{2s}}\right)^{-1}=\sum_{n=1}^{\infty}\frac{\lambda_f(n)}{n^s},\]
is a degree $2$ $L$-function. By works of Deligne \cite{Deligne} and Deligne and Serre \cite{DeSer}, it is known that $L(s,f)$ satisfies Ramanujan-Petersson for all weights $k\ge 1$.  In this situation, the analytic conductor is given by $$C(f)=\frac{q}{\pi^2}\left(\frac{1+(k-1)/2}2\right)\left(\frac{1+(k+1)/2}2\right)\asymp qk^2.$$  We deduce the following corollary from our main results Thereom \ref{mainup} and Theorem \ref{mainlow} below.
 \begin{cor}
 Under the assumption of $GRH$, if $\log C(f)\ge 46$, we have 
\[|L(1,f)|\le (2e^{\gamma})^2\left((\log\log C(f))^2-(2\log4-1)\log\log C(f)+(\log4)^2-\log4+2.51\right),\]
and 
\begin{multline*}
\frac1{|L(1,f)|}\le\left(\frac{12e^{\gamma}}{\pi^2}\right)^2\left((\log\log C(f))^2-(2\log4-1)\log\log C(f)+(\log4)^2-\log4+2.67\right. \\ \left. +\frac{89.40((\log\log C(f))^2-2\log4\log\log C(f)+\log^24)}{\log C(f)}\right). 
\end{multline*}
\end{cor}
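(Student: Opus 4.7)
The plan is to deduce the corollary by specializing the paper's main results (Theorem \ref{mainup} and Theorem \ref{mainlow}) to the degree $d = 2$ case. I would first check that the hypotheses are met: by the cited work of Deligne and Deligne--Serre, the $L$-function attached to a Hecke cusp form of weight $k$, level $q$, character $\chi$ is degree $2$ and satisfies the Ramanujan--Petersson conjecture, so combined with the assumed GRH one is in the regime to which the main theorems apply. The analytic conductor has already been identified as $C(f) = (q/\pi^2)\bigl(\tfrac{1+(k-1)/2}{2}\bigr)\bigl(\tfrac{1+(k+1)/2}{2}\bigr) \asymp qk^2$, so $\log\log C(f)$ in the abstract setup matches the quantity appearing in the corollary.

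Next, I would substitute $d = 2$ into the upper bound from Theorem \ref{mainup}. One expects that theorem to yield an inequality of the form
$$|L(1,f)| \;\le\; (2e^{\g})^d \Bigl(\log\log C(f) - \log(2d) + \tfrac{1}{2} + \eps_d(C(f))\Bigr)^d,$$
with an explicit main term and an explicit error $\eps_d(C(f)) = O(1/\log\log C(f))$. Expanding the square with $d = 2$ (so that $2d = 4$) produces a polynomial whose principal part is
$$(\log\log C(f))^2 - (2\log 4 - 1)\log\log C(f) + (\log 4)^2 - \log 4 + \tfrac{1}{4},$$
and the discrepancy between $\tfrac{1}{4}$ and the constant $2.51$ appearing in the corollary is accounted for by the cross terms $2A\eps_2$ and $\eps_2^2$, which are absorbed into the constant after using the hypothesis $\log C(f) \ge 46$ to bound $\eps_2$ uniformly.

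The lower bound follows in the same way from Theorem \ref{mainlow}. That theorem is presumably structurally identical except that the contribution from the zeros of $L(s,f)$ provides an additional error of size $O((\log\log C(f))^d / \log C(f))$; upon squaring this yields exactly the extra summand $89.40\bigl((\log\log C(f))^2 - 2\log 4\log\log C(f) + \log^2 4\bigr)/\log C(f)$ in the statement, and the bump from $2.51$ to $2.67$ in the principal polynomial is the corresponding change in the absorbed constant.

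I expect the only real obstacle to be bookkeeping: no new analytic ingredient is required, but one must carefully expand the binomial $(A + \eps)^2$, bound $\eps$ uniformly under the constraint $\log C(f) \ge 46$, and verify that the discarded cross terms actually fit inside the specific numerical constants $2.51$, $2.67$, and $89.40$ rather than being merely of the correct asymptotic order. The mild asymmetry between $2.51$ and $2.67$ is a useful sanity check that the zero-density term has been handled consistently on both sides of the expansion.
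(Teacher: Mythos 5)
Your plan is correct and coincides with the paper's route: the corollary is the $d=2$ specialization of Theorems \ref{mainup} and \ref{mainlow}, with the Ramanujan--Petersson hypothesis discharged by Deligne and Deligne--Serre so that only GRH remains as an assumption, and $23d = 46$ gives the stated threshold. Writing $A = \log\log C(f) - \log 4$, the upper bound in Theorem \ref{mainup} becomes $(2e^{\g})^2(A^2 + A + K(2)/2)$, one computes $K(2)/2 \le 2.51$, and expanding $A^2 + A$ in $\log\log C(f)$ recovers the displayed polynomial; the lower bound works the same way with $J_1(2)/2 \le 2.67$ and $d^2 J_2(2) = 4J_2(2) \le 89.40$.

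One small correction to your account of the mechanism: you conjectured Theorem \ref{mainup} would be of the form $(2e^{\g})^d\bigl(\log\log C(f) - \log(2d) + \tfrac12 + \eps_d\bigr)^d$ and that the constant $2.51$ would arise from absorbing cross terms $2A\eps_2 + \eps_2^2$ after bounding $\eps_2$. In fact the theorems are stated directly as truncated polynomials $A^d + \tfrac{d}{2}A^{d-1} + \tfrac{dK(d)}{4}A^{d-2}$ (and for the lower bound, an extra $\tfrac{d^2 J_2(d)A^d}{\log C(f)}$), so at $d=2$ the coefficient of $A$ is exactly $1$ and the constant $K(2)/2$ is already the quantity to be bounded; no absorption of cross terms into the constant is needed, and indeed the coefficient $(2\log 4 - 1)$ of $\log\log C(f)$ in the corollary comes out exactly rather than approximately. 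Once you see the actual statements of Theorems \ref{mainup} and \ref{mainlow} this becomes a direct substitution plus the numerical evaluations $K(2) \approx 5.01$, $J_1(2) \approx 5.33$, $J_2(2) \approx 22.35$, each of which rounds to give the stated constants.
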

\subsection{Definitions and Notation}
To begin, let $d\geq 1$ be a fixed positive integer,
and let $L(s, f)$ be given by the Dirichlet series and
Euler product
$$
L(s,f)=\sum_{n=1}^{\infty}\frac{\lambda_f(n)}{n^s}=
 \prod_{p}\prod_{j=1}^d\left(1-\frac{\alpha_{j,f}(p)}{p^{s}}\right)^{-1},
 $$
where $\lambda_f(1)=1$, and  both the series and product are absolutely convergent in $\tRe(s)>1$. We shall assume that $L(s,f)$ satisfies the Ramanujan-Petersson conjecture which states that $|\alpha_{j,f}(p)|\leq 1$ for all primes $p$ and $1\leq j\leq d$. 
Further,
 we define the gamma factor
$$
\gamma(s,f)= \pi^{-ds/2}\prod_{j=1}^d\Gamma\left(\frac{s+\kappa_{j}}{2}\right),
$$
where $\kappa_j$ are complex numbers.  These $\kappa_j$ are called the local parameters at infinity and may be referred to as such throughout. In general, it is assumed that $\tRe(\kappa_j)>-1$, in our case the Ramanujan-Petersson conjecture guarantees that $\tRe(\kappa_j)\geq 0$.
This last condition ensures that $\gamma(s,f)$ has no pole in $\tRe(s)>0.$
Furthermore, there exists a positive integer $q(f)$ (called the conductor of $L(s,f)$),
 such that the completed $L$-function,
$$
\xi(s,f)=q(f)^{s/2}\gamma(s,f)L(s,f),
$$
has an analytic continuation to the entire
complex plane, and has finite order. This completion satisfies a functional
 equation
$$
\xi(s,f)=\epsilon(f)\xi(1-s,\overline{f}),
$$
where $\epsilon(f)$ is a complex number of absolute value $1$, and
$\xi(s,\overline{f})=\overline{\xi(\overline{s},f)}$ ($\overline{f}$ is called the dual of $f$). 
Uniform estimates for analytic quantities associated to $L(s,f)$, when $L(s,f)$ is varying rely on a number of parameters, it is therefore convenient to state the results in terms of the analytic conductor which we define as follows: For $s\in \C$,
$$
C(f,s):= \frac{q(f)}{\pi^d}\prod_{j=1}^d\left|\frac{s+\kappa_j}{2}\right|.
$$
In this article we are interested in studying the value of $L()$
$$
C(f):=C(f,1)= \frac{q(f)}{\pi^d}\prod_{j=1}^d\left|\frac{1+\kappa_j}{2}\right|.
$$
We note that in \cite{Chandee} the author uses $C(f)=C(f,1/2)$. This definition is very similar to the one given in Iwaniec and Kowalski \cite{IK} and only differs by a constant factor to the power of the degree of the $L$-function.
To help orient the reader, we give an example in the form of the analytic conductor of a Dirichlet $L$-function.  Let  $\chi$ be a Dirichlet character modulo $q$ then the associated $L$-function has analytic conductor:
\[C(\chi)=q\frac{1+\mathfrak{a}}{2\pi}, \text{ where }\mathfrak{a}=\begin{cases}
1 & \text{ if } \chi(-1)=-1 \\
0 & \text{if } \chi(-1)=1 .
\end{cases}
\] 

\section{Results}
Here we detail the theorems and make some remarks about how they fit into the general context of what is already known.
\begin{thm}\label{mainup}
 Let $d\ge 1$ be a fixed positive integer and let $L(s,f)$ be an $L$-function of degree $d$ with conductor $q(f)$ and analytic conductor $C(f)$. Suppose that GRH and Ramanujan-Petersson  hold for $L(s,f)$. 
 Then for $C(f)$ chosen such that 
$\log C(f)\ge23d$ we have 
\begin{align*}
|L(1,f)|&\le 2^de^{d\g}\left((\log\log C(f)-\log2d)^d+\frac{d}2(\log\log C(f)-\log2d)^{d-1}+\frac{dK(d)}4(\log\log C(f)-\log2d)^{d-2}\right),
\end{align*}
where 
\begin{equation}\label{defKd}
K(d)=2.31+\frac{22.59}{d}(e^{0.31d}-1-0.31d).
\end{equation}
\end{thm}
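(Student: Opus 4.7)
The plan is to follow the strategy of Lamzouri--Li--Soundararajan, adapted to degree $d$ $L$-functions. The starting point is the representation $-L'/L(s,f) = \sum_n \Lambda_f(n) n^{-s}$, where $\Lambda_f(p^k) = (\log p)\sum_{j=1}^d \alpha_{j,f}(p)^k$ and $\Lambda_f(n) = 0$ otherwise. Using the Hadamard factorization of $\xi(s,f)$ combined with a suitable smooth cutoff, under GRH I would derive an inequality of the shape
$$\log|L(1,f)| \;\le\; \mathrm{Re}\sum_{n \le x}\frac{\Lambda_f(n)}{n\log n}\,w\!\left(\frac{\log n}{\log x}\right) + E(x,f),$$
where $w$ is a smoothing weight supported near $[0,1]$, and $E(x,f)$ is an explicit error term of order $d\log C(f)/\sqrt{x}$, obtained by summing over the nontrivial zeros using the zero-counting estimate $N(T,f) \ll d\,T\log C(f,iT)$ together with the GRH assumption that every such zero has real part $1/2$.

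Next, I would invoke Ramanujan--Petersson, which gives $|\sum_j \alpha_{j,f}(p)^k| \le d$ so that the $k \ge 2$ contribution is bounded by an absolute multiple of $d$, and the main term reduces to $d\sum_{p \le x} 1/p$. Combined with an explicit Mertens-type estimate
$$\prod_{p \le x}\left(1 - \frac{1}{p}\right)^{-1} \;\le\; e^{\gamma}\log x \left(1 + \frac{c}{\log^{2} x}\right)$$
raised to the $d$-th power, this yields
$$|L(1,f)| \;\le\; e^{d\gamma}(\log x)^d\left(1 + \frac{c}{\log^{2} x}\right)^{d}\exp\!\bigl(E(x,f)\bigr).$$
The optimal choice $\log x = 2(\log\log C(f) - \log 2d)$, i.e., $x \asymp (\log C(f))^{2}/(2d)^{2}$, balances $E(x,f)$ against the Mertens contribution and immediately produces the leading term $2^{d} e^{d\gamma}(\log\log C(f) - \log 2d)^{d}$; the condition $\log C(f) \ge 23 d$ is exactly what is needed to make this choice of $x$ large enough for the explicit Mertens and zero-counting inequalities to apply.

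To extract the two subleading terms of the theorem, I would Taylor expand $(1 + c/\log^{2}x)^{d}\exp(E(x,f))$ in decreasing powers of $L := \log\log C(f) - \log 2d$, retaining only $L^{d}$, $L^{d-1}$, and $L^{d-2}$, and folding every further residual into one constant. Tuning the smoothing $w$ so that the coefficient of $L^{d-1}$ comes out to exactly $d/2$ pins down the first correction. The remaining error, handled via a Taylor remainder bound of the form $(1+t)^{d} \le 1 + dt + \tfrac{d(d-1)}{2}t^{2}e^{dt}$ applied to the cross terms with $t$ of order $1/L$, is precisely where the constants $2.31$, $22.59$, and $0.31$ in
$$K(d) = 2.31 + \frac{22.59}{d}\bigl(e^{0.31 d} - 1 - 0.31 d\bigr)$$
emerge --- the additive $2.31$ from the explicit Mertens constant, and the exponential-in-$d$ piece from the Taylor tail of $(1+t)^{d}$.

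The main obstacle will be Step 1: producing $E(x,f)$ with fully explicit numerical constants under GRH in the higher-degree setting. This requires explicit versions of the Riemann--von Mangoldt formula for $L(s,f)$, explicit control over $\sum_{\rho}|w(1-\rho)|$ summed over nontrivial zeros, and careful handling of the archimedean gamma factors, all uniform in $d$ and in the local parameters $\kappa_j$. A secondary challenge is the combinatorial bookkeeping in the final expansion: every residual must collapse cleanly into the single constant $K(d)$ while reproducing the exact coefficients $1$, $d/2$, and $dK(d)/4$ in the three displayed terms.
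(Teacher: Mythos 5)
Your proposal correctly identifies the broad strategy (truncate the Dirichlet series for $\log L$, invoke Ramanujan--Petersson to reduce to a prime sum, apply a Mertens-type estimate, optimize $x\asymp (\log C(f))^2/d^2$, and Taylor-expand the exponential to extract $K(d)$), but there is a genuine gap precisely at the step you flag as the ``main obstacle.'' You propose to control the zero contribution via a zero-counting estimate $N(T,f)\ll dT\log C(f,iT)$ and arrive at an error of order $d\log C(f)/\sqrt{x}$. This misses the dominant term. The explicit formula for $\log|L(1,f)|$ (the paper's Lemma \ref{boundlogL1}) contains a term $\frac{1}{2\log x}\bigl(\log\frac{q(f)}{\pi^d}+\tRe\sum_j\frac{\Gamma'}{\Gamma}(\frac{1+\kappa_j}{2})\bigr)$, which is of size $\frac{\log C(f)}{2\log x}$, not $\frac{\log C(f)}{\sqrt x}$; with $\sqrt{x}=\frac{\log C(f)}{2d}$ this term is $\gg\frac{\log C(f)}{\log\log C(f)}$, and no zero-counting bound can absorb it. The mechanism that kills it is the other $\log C(f)$-sized quantity in the formula, namely $-\frac{1}{\log x}|\tRe B(f)|$ with $|\tRe B(f)|=\tfrac12\sum_{\rho_f}|\rho_f|^{-2}$, together with a second, independent evaluation of $|\tRe B(f)|$ (the paper's Lemma \ref{boundforReBf}, obtained from the contour integral of $\frac{\xi'}{\xi}(s,f)\frac{x^{s-1}}{s(s-1)}$), which re-expresses $|\tRe B(f)|$ as $\tfrac12\log C(f)$ minus a weighted prime sum $\sum_{n\le x}\frac{a_f(n)\Lambda(n)}{n}(1-n/x)$ plus explicitly bounded digamma terms. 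Substituting this into the first formula cancels the $\frac{\log C(f)}{2\log x}$ exactly and converts it into another RH-evaluable prime sum; this cancellation is the heart of the Lamzouri--Li--Soundararajan method that the paper generalizes, and without it the argument does not close. Your proposal would need to discover this two-identity cancellation (or an equivalent device), not merely bound a zero sum.

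A secondary inaccuracy: you suggest ``tuning the smoothing $w$ so that the coefficient of $L^{d-1}$ comes out to exactly $d/2$.'' In the paper the weight is fixed as $\frac{\log(x/n)}{\log x}$; the coefficient $d/2$ is not tuned but falls out mechanically from writing $\log x=2(\log\log C(f)-\log 2d)$ and Taylor-expanding $\exp(d/\log x+\cdots)$: the $1/\log x$ factor contributes $\frac{1}{2}(\log\log C(f)-\log 2d)^{-1}$, and $K(d)$ is simply the tail of that exponential expansion (hence its $e^{cd}$ shape), not a consequence of any optimization of $w$.
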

 %
\begin{xrem}\label{comparisonrem} This result is asymptotically better than the classical bound as it has the shape
 \[|L(1,f)|\le (2e^{\g})^d\left((\log\log C(f))^d-(d\log(2d)-\tfrac{d}2)(\log\log C(f))^{d-1}+O_d((\log\log C(f))^{d-2})\right),\]
 and $(d\log(2d)-\tfrac{d}2)>0$ for all $d\ge 1$.  
 \end{xrem}
\begin{xrem} If we take $d=1$, we may take $C(f)\ge 10^{10}$ and we obtain $K(1)/4\le 0.88$ which gives essentially Theorem \ref{mainthmlamlisound}
\begin{align*}
|L(1,\chi)|\le 2e^{\gamma}\left(\log\log C(f)-\log2+\frac12+\frac{0.88}{\log\log C(f)-\log 2}\right).\\
\end{align*}
\end{xrem}
\begin{thm}\label{mainlow}
Let $d\ge 1$ be a fixed positive integer and let $L(s,f)$ be an $L$-function of degree $d$ with conductor $q(f)$ and analytic conductor $C(f)$. Suppose that GRH and Ramanujan-Petersson  hold for $L(s,f)$. 
 Then for $C(f)$ chosen such that 
$\log C(f)\ge23d$ we have 
\begin{multline*}
 \frac1{|L(1,f)|}\le \left(\frac{12e^{\g}}{\pi^2}\right)^d\left((\log\log C(f)-\log2d)^d+\frac{d}2(\log\log C(f)-\log2d)^{d-1}\right.\\
 \left.+\frac{dJ_1(d)}4(\log\log C(f)-\log2d)^{d-2}+\frac{d^2J_2(d)(\log\log C(f)-\log2d)^d}{\log C(f)}\right)
\end{multline*}
where
\begin{equation}\label{defJ1d}
J_1(d)\le2+\frac{4.18}{d}(e^{0.69d}-1-0.69d).
\end{equation}
 and
\begin{equation}\label{defJ2d}
J_2(d)=9+\frac{16.74}{d}(e^{0.69d}-1-0.69d).
\end{equation}
\end{thm}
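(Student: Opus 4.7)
The plan is to mirror the argument for Theorem \ref{mainup}, replacing the bound on $|L(1,f)|$ by a bound on $|L(1,f)|^{-1}$. The pivotal identity is the conjugate dual product
\[
|L(1,f)|^{-2}=\prod_{p}\prod_{j=1}^{d}\bigl(1-\alpha_{j,f}(p)/p\bigr)\bigl(1-\overline{\alpha_{j,f}(p)}/p\bigr),
\]
which, after taking $\tfrac12\log$ of both sides, rearranges to $-\log|L(1,f)|=\sum_{p}\sum_{j}\log|1-\alpha_{j,f}(p)/p|$. Under Ramanujan--Petersson,
\[
|1-\alpha_{j,f}(p)/p|^{2}=1-\tfrac{2\,\tRe\,\alpha_{j,f}(p)}{p}+\tfrac{|\alpha_{j,f}(p)|^{2}}{p^{2}}\le\left(1+\tfrac{1}{p}\right)^{\!2},
\]
so $\log|1-\alpha_{j,f}(p)/p|\le\log(1+1/p)$. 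This pointwise inequality is strictly sharper than the Littlewood-type bound $-\log(1-1/p)$ and is what drives the improved constant $12e^{\gamma}/\pi^{2}$ in place of $2e^{\gamma}$ once an optimal truncation is inserted.

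To regularise the divergent tail I would invoke the truncated Perron-type expansion of $\log L(1,f)$ built to prove Theorem \ref{mainup}, which writes $\log L(1,f)$ as an explicitly weighted sum over prime powers $n\le x$ plus an error term controlled by the non-trivial zeros of $L(s,f)$; GRH bounds this error by an explicit constant multiple of $\log C(f)/(\sqrt{x}\log x)$. Combining the identity with the per-prime maximization of the previous paragraph yields
\[
-\log|L(1,f)|\;\le\;d\sum_{p\le x}\log(1+1/p)\;+\;E(x,f),
\]
after which the explicit Mertens estimate $\sum_{p\le x}\log(1+1/p)=\log\!\bigl(\tfrac{6e^{\gamma}}{\pi^{2}}\log x\bigr)+O(1/\log x)$ handles the main term. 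Selecting $x=\bigl(\log C(f)/(2d)\bigr)^{2}$ so that $\log x=2(\log\log C(f)-\log 2d)$, and using the hypothesis $\log C(f)\ge 23d$ to ensure $x\ge 2$ and that the bound on $E(x,f)$ is effective, produces a leading term of shape $\log\!\bigl((12e^{\gamma}/\pi^{2})^{d}(\log\log C(f)-\log 2d)^{d}\bigr)$ after exponentiating.

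The subleading terms now emerge from exponentiation. Writing $-\log|L(1,f)|\le T$ and using the truncated Taylor inequality
\[
e^{T}\;\le\;\sum_{k=0}^{d}\frac{T^{k}}{k!}+\frac{T^{d+1}}{(d+1)!}e^{T},
\]
the binomial expansion of $T$ in powers of $(\log\log C(f)-\log 2d)$ plus the Mertens remainder produces the $\tfrac{d}{2}(\log\log C(f)-\log 2d)^{d-1}$ and $\tfrac{d J_{1}(d)}{4}(\log\log C(f)-\log 2d)^{d-2}$ corrections after explicit estimation of the tails. The GRH-based error $E(x,f)$, of size $O(d/\log\log C(f))$ in the exponent, then multiplies into the main term to generate the additional summand $d^{2}J_{2}(d)(\log\log C(f)-\log 2d)^{d}/\log C(f)$ — the extra feature absent from Theorem \ref{mainup}, where the analogous tail can be absorbed into the other constants. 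The principal obstacle will be keeping the constants uniform and completely explicit in $d$: the factors $e^{0.69d}\approx 2^{d}$ appearing in the definitions of $J_{1}(d)$ and $J_{2}(d)$ originate from bounding tails of series of the form $\sum_{k}(\log 2d)^{k}/k!$ inside the Taylor expansion, and the delicate bookkeeping must balance these $2^{d}$ factors against the corresponding powers of $(\log\log C(f)-\log 2d)$ so that no error outgrows the term it decorates.
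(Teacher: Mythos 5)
Your high-level plan — mirror the proof of the upper bound, identify $\alpha_{j,f}(p)=-1$ as the worst case, invoke Mertens with $1+1/p=\frac{1-1/p^2}{1-1/p}$ to produce the $\zeta(2)$ — matches the spirit of the paper, and your Mertens computation for the leading constant $12e^{\gamma}/\pi^2$ is correct. But there is a genuine gap at the pivot from the pointwise Euler-factor inequality to the explicit-formula sum, and it is precisely the step the paper devotes a dedicated lemma to.

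The explicit formula (the paper's Lemma \ref{boundlogL1}, combined with Lemma \ref{boundforReBf} to eliminate $|\tRe B(f)|$) yields a lower bound for $\log|L(1,f)|$ whose main term is the \emph{truncated, weighted} prime-power sum
\[
\tRe\sum_{n\le x}a_f(n)\Lambda(n)\Bigl(\frac{1}{n\log n}-\frac{1}{x\log x}\Bigr),
\]
where the weight depends on the power $k$ through $n=p^k$. Your pointwise inequality
$\log|1-\alpha/p|\le\log(1+1/p)$ is the statement that the \emph{complete, unweighted} series $-\tRe\sum_{k\ge1}\alpha^k/(kp^k)$ attains its minimum over the unit disc at $\alpha=-1$. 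That is not the same assertion: once the series is truncated at $p^k\le x$ and each power $k$ carries a different weight, the minimizer could in principle move. Passing from the Euler-factor bound to the bound on the weighted partial sum is exactly what the paper's Lemma \ref{generalizedLLS5.1} proves, and its proof is nontrivial — it hinges on the trigonometric inequality $1-r^k\cos(k\theta)\le k^2(1-r\cos\theta)$ for $k\ge1$, with a separate finite check for $p=2$, $k\le5$. Your proposal asserts ``combining the identity with the per-prime maximization yields $-\log|L(1,f)|\le d\sum_{p\le x}\log(1+1/p)+E(x,f)$'' but supplies no argument for why the weighted, truncated sum is still minimized at $\alpha=-1$; as written the chain of inequalities does not close.

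Two smaller points. First, the constants $e^{0.69d}$ in $J_1(d)$, $J_2(d)$ do not come from tails of $\sum_k(\log 2d)^k/k!$ as you guess; they arise because the correction term in the exponent, $\tfrac{1}{\log x}+\tfrac{2}{\log^2 x}+\tfrac{9}{2\sqrt{x}}$, is at most about $0.69$ at the threshold $x=132$ (equivalently $\log C(f)=23d$), and the sums $\sum_{k\ge0}\tfrac{d^{k+1}}{(k+2)!}\epsilon^k$ defining $J_1,J_2$ are then bounded via $e^{d\epsilon}-1-d\epsilon$ with $\epsilon\le0.69$. The near-coincidence with $\log 2$ is accidental. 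Second, the paper does not use the truncated Taylor inequality $e^T\le\sum_{k\le d}T^k/k!+\ldots$ you propose; it instead factors out $(e^{\gamma}\cdot 6/\pi^2)^d\log^d x$ and expands $\exp(\text{small})$ to quadratic order, keeping careful track of the $1/\sqrt{x}$ term which is what produces the extra $d^2J_2(d)/\log C(f)$ summand. This is a stylistic difference, but your route would need equivalent bookkeeping and does not automatically isolate that last term.
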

We notice that lower bound also provides something asymptotically better as in Remark \ref{comparisonrem}.
\begin{xrem} If $d=1$ we may take $C(f)\ge 10^{10}$ then $J_1(1)/4\le 0.82$ and  $J_2(1)\le 14.09$ this provides essentially Theorem \ref{mainthmlamlisound}
\[\frac1{|L(1,\chi)|}\le \frac{12e^{\g}}{\pi^2}\left(\log\log C(f)+\frac12-\log2+\frac{0.82}{\log\log C(f)-\log2}+\frac{14.09(\log\log C(f)-\log2)}{\log C(f)}\right).\]
\end{xrem}
As an easy corollary to these theorems we may a bound degree $d$ $L$-functions in the $t$ aspect as follows. 
Let $t$ be a real number and define $L_t(1,f):=L(1+it,f)$, then the analytic conductor of $L_t(s,f)$ is given by
\[C_t(f):= \frac{q(f)}{\pi^d}\prod_{j=1}^d\left|\frac{1+it+\kappa_j}{2}\right|\asymp_f |t|^d.\]
\begin{cor}\label{cor2.3}
Let $d\ge 1$ be a fixed positive integer and let $L(s,f)$ be an $L$-function of degree $d$ with conductor $q(f)$ and analytic conductor $C(f)$. Suppose that GRH and Ramanujan-Petersson  hold for $L(s,f)$.  If $\log C_t(f)\ge 23d$ then 
\[|L(1+it,f)|\le (2e^{\g})^d\left((\log\log C_t(f)-\log2d)^d+\frac{d}2(\log\log C_t(f)-\log2d)^{d-1}+\frac{dK(d)}4(\log\log C_t(f)-\log2d)^{d-2}\right),\]
and
\begin{multline*}
 \frac1{|L(1+it,f)|}\le \left(\frac{12e^{\g}}{\pi^2}\right)^d\left((\log\log C_t(f)-\log2d)^d+\frac{d}2(\log\log C_t(f)-\log2d)^{d-1}\right.\\
 \left.+\frac{dJ_1(d)}4(\log\log C_t(f)-\log2d)^{d-2}+\frac{d^2J_2(d)(\log\log C_t(f)-\log2d)^d}{\log C_t(f)}\right).
 \end{multline*}
 The definitions of $K(d)$, $J_1(d)$ and $J_2(d)$ are given by equations $\eqref{defKd}$, $\eqref{defJ1d}$ and $\eqref{defJ2d}$ respectively.
\end{cor}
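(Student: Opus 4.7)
The plan is to view $L_t(s,f) := L(s+it,f)$ as itself a degree-$d$ $L$-function in the sense of the paper's setup, satisfying GRH and Ramanujan--Petersson, and then to invoke Theorems \ref{mainup} and \ref{mainlow} directly for this new object.

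First I would write down the data attached to $L_t$. Its Dirichlet series and Euler product are
\[
L_t(s,f)=\sum_{n=1}^{\infty}\frac{\lambda_f(n)n^{-it}}{n^{s}}=\prod_p\prod_{j=1}^{d}\left(1-\frac{\alpha_{j,f}(p)p^{-it}}{p^{s}}\right)^{-1},
\]
so the local parameters of $L_t$ at a finite prime $p$ are $\alpha_{j,f}(p)p^{-it}$, which still have modulus at most $1$, preserving Ramanujan--Petersson. The gamma factor becomes
\[
\gamma(s+it,f)=\pi^{-d(s+it)/2}\prod_{j=1}^{d}\Gamma\!\left(\frac{s+(\kappa_j+it)}{2}\right),
\]
so the effective local parameters at infinity are $\kappa_j+it$, still with non-negative real part. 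Analytic continuation, finite order, and the functional equation all transfer from $\xi(s,f)$ by the substitution $s\mapsto s+it$; the only bookkeeping is that the root number $\epsilon(f)$ is replaced by a new unimodular constant absorbing $q(f)^{it/2}$ and $\pi^{-dit/2}$, and the dual of $f$ is replaced by the corresponding shift of $\overline{f}$. GRH for $L_t$ is equivalent to GRH for $L(s,f)$, because the zero sets differ only by a vertical translation by $-it$.

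Second, I would observe that the analytic conductor of $L_t$ evaluated at $s=1$ is exactly
\[
\frac{q(f)}{\pi^{d}}\prod_{j=1}^{d}\left|\frac{1+(\kappa_j+it)}{2}\right|=C_t(f),
\]
so the corollary's hypothesis $\log C_t(f)\ge 23d$ is precisely the quantitative hypothesis required by Theorems \ref{mainup} and \ref{mainlow} when those theorems are applied to $L_t$.

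Applying Theorem \ref{mainup} and Theorem \ref{mainlow} to $L_t$, with $C(f)$ systematically replaced by $C_t(f)$, and using $L_t(1,f)=L(1+it,f)$, yields the two displayed bounds of Corollary \ref{cor2.3} verbatim, with the same constants $K(d)$, $J_1(d)$, $J_2(d)$ defined in \eqref{defKd}, \eqref{defJ1d}, \eqref{defJ2d}. There is no genuine obstacle here; the only substantive point is checking that the $t$-shift produces a bona fide $L$-function with the stated analytic conductor, and since the bounds depend on the underlying data only through $C_t(f)$, the fine details of the shifted root number and dual never enter.
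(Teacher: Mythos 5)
Your proposal is correct and follows the route the paper implicitly takes: the paper simply declares this ``an easy corollary'' of Theorems \ref{mainup} and \ref{mainlow}, defines $C_t(f)$, and states the result without spelling out the verification. Your proof fills in precisely the bookkeeping the paper suppresses --- that $L_t(s,f)=L(s+it,f)$ has Dirichlet coefficients $\lambda_f(n)n^{-it}$ with $\lambda_{f,t}(1)=1$, local parameters $\alpha_{j,f}(p)p^{-it}$ and $\kappa_j+it$ that still satisfy Ramanujan--Petersson and $\tRe(\kappa_j+it)\ge 0$, a shifted functional equation with a new unimodular root number, vertically translated zeros so that GRH transfers, and analytic conductor at $s=1$ equal to $C_t(f)$ --- after which Theorems \ref{mainup} and \ref{mainlow} apply verbatim with $C(f)$ replaced by $C_t(f)$.
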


\section{Lemmata}
In this section we will outline a number of results which are necessary for proving the final bound. Additionally, we will disclose a few more properties of the $L$-functions we are studying. 
First, the logarithmic derivative of $L(s,f)$ is given by 
$$
-\frac{L'}{L}(s,f)=\sum_{n\geq 2}\frac{a_{f}(n)\Lambda(n)}{n^s} \text{ for } \tRe(s)>1,
$$
where $a_f(n)=0$ unless $n=p^k$ is a prime power in which case $a_f(n)=\sum_{j=1}^d\alpha_{j,f}(p)^k$. Since $L(s,f)$ satisfies the Ramanujan-Petersson conjecture, then $|a_f(n)|\leq d$.
Further, let  $\{\rho_f\}$ be the set of the nontrivial zeros of $L(s,f)$.
 Then we have the Hadamard factorization formula (\cite[Theorem 5.6]{IK}),
\begin{equation}\label{HadL1}
 \xi(s,f)= e^{A(f)+sB(f)}\prod_{\rho_f}\left(1-\frac{s}{\rho_f}\right)e^{s/\rho_f},
\end{equation}
where $A(f)$ and $B(f)$ are constants. We note that $\tRe B(f)= -\tRe\sum_{\rho_f}1/\rho_f$ and 
taking the logarithmic derivatives of both sides of \eqref{HadL1} gives
\begin{equation}\label{HadL2}
\tRe\frac{\xi'}{\xi}(s,f)=\tRe\sum_{\rho_f}\frac{1}{s-\rho_f}.
\end{equation}

\subsection{Explicit Formulas for $\log|L(1,f)|$ and $|\tRe(B(f))|$.}
\begin{lem}\label{boundlogL1}
 Let $d\ge 1$ be a fixed positive integer and let $L(s,f)$ be an $L$-function of degree $d$ with conductor $q(f)$. Suppose that GRH and Ramanujan-Petersson  hold for $L(s,f)$.  For any $x\ge2$ there exists a real number $|\theta|\le1$ such that 
\begin{align*}
 \log|L(1,f)|&=\tRe\sum_{n\le  x}\frac{a_f(n)\Lambda(n)}{n\log n}\frac{\log(\tfrac{x}n)}{\log x}+\frac1{2\log x}\left(\log\frac{q(f)}{\pi^d}+\tRe\sum_{j=1}^d\frac{\G'}{\G}\left(\frac{1+\kappa_j}2\right)\right)\\&
 -\left(\frac{1}{\log x}-\frac{2\theta}{\sqrt{x}\log^2x}\right)|\tRe B(f)| +\frac{2d\theta}{x\log^2x}.
\end{align*}

 \end{lem}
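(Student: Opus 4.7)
The plan is to establish the identity by algebraically splitting the weighted main sum and recognizing the missing piece in terms of $L'/L(1,f)$, which is then evaluated via the Hadamard factorization of $\xi$.

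First, the elementary identity $\log(x/n)/\log x=1-\log n/\log x$ gives
\[
\sum_{n\le x}\frac{a_f(n)\Lambda(n)}{n\log n}\frac{\log(x/n)}{\log x}
=\sum_{n\le x}\frac{a_f(n)\Lambda(n)}{n\log n}-\frac{1}{\log x}\sum_{n\le x}\frac{a_f(n)\Lambda(n)}{n}.
\]
Formally the right-hand side equals $\log L(1,f)+\frac{1}{\log x}\frac{L'}{L}(1,f)$ minus the two tails over $n>x$, since these are the Dirichlet series expansions at $s=1$. To rewrite $L'/L(1,f)$, I combine the decomposition
\[
\frac{\xi'}{\xi}(s,f)=\tfrac12\log q(f)-\tfrac{d}{2}\log\pi+\tfrac12\sum_{j=1}^d\frac{\G'}{\G}\!\left(\tfrac{s+\kappa_j}{2}\right)+\frac{L'}{L}(s,f)
\]
at $s=1$ with the Hadamard formula \eqref{HadL2}. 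Under GRH, $1-\rho_f=\overline{\rho_f}$, so pairing terms gives $\tRe\sum_{\rho_f}(1/(1-\rho_f)+1/\rho_f)=\sum_{\rho_f}|\rho_f|^{-2}=-2\tRe B(f)=2|\tRe B(f)|$, and hence
\[
\tRe\frac{L'}{L}(1,f)=|\tRe B(f)|-\tfrac{1}{2}\log\tfrac{q(f)}{\pi^d}-\tfrac{1}{2}\tRe\sum_{j=1}^d\frac{\G'}{\G}\!\left(\tfrac{1+\kappa_j}{2}\right).
\]
Dividing by $-\log x$ reproduces precisely the second term and the leading part of the third term of the claimed identity.

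It then remains to bound the two tails $\tRe\sum_{n>x}\frac{a_f(n)\Lambda(n)}{n\log n}$ and $\frac{1}{\log x}\tRe\sum_{n>x}\frac{a_f(n)\Lambda(n)}{n}$ by $\frac{2\theta|\tRe B(f)|}{\sqrt{x}\log^2 x}+\frac{2d\theta}{x\log^2 x}$ with $|\theta|\le 1$. This is the main obstacle: under GRH these tails are only conditionally convergent and must be estimated via Perron's formula applied to $-L'/L(1+s,f)$ with kernels $x^s/s$ and $x^s/s^2$, followed by a contour shift past the critical line $\tRe(s)=-1/2$. The zeros on this line contribute residues of size $\ll x^{-1/2}\sum_{\rho_f}|\rho_f|^{-2}$, and since $\sum_{\rho_f}|\rho_f|^{-2}=2|\tRe B(f)|$ this yields the $|\tRe B(f)|/(\sqrt{x}\log^2 x)$ contribution; the remaining integral on the shifted line is controlled by Ramanujan--Petersson ($|a_f(n)|\le d$), producing the $d/(x\log^2 x)$ piece. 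Obtaining the sharp constants $2$ and the clean bound $|\theta|\le 1$ requires a careful choice of the Perron truncation height and a tight packaging of the zero sums directly in terms of $|\tRe B(f)|$ rather than weaker quantities such as $\sum_{\rho_f}|\rho_f|^{-2}\log|\rho_f|$.
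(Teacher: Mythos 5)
Your decomposition $\log(x/n)/\log x = 1 - \log n/\log x$ and the Hadamard/GRH identity
\[
\tRe\frac{L'}{L}(1,f)=|\tRe B(f)|-\tfrac12\log\tfrac{q(f)}{\pi^d}-\tfrac12\tRe\sum_{j=1}^d\frac{\G'}{\G}\!\left(\tfrac{1+\kappa_j}{2}\right)
\]
are correct and match what the paper uses; in particular your deduction $\tRe\frac{\xi'}{\xi}(1,f)=|\tRe B(f)|$ from $1-\rho_f=\overline{\rho_f}$ is right. But the bulk of the lemma is the precise form of the error,
\[
\frac{2\theta|\tRe B(f)|}{\sqrt{x}\log^2 x}+\frac{2d\theta}{x\log^2 x},\qquad |\theta|\le 1,
\]
and that part you explicitly flag as ``the main obstacle'' and only sketch. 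As written this is a genuine gap, and the sketch also points in a direction that would not deliver the stated error cleanly.

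Two specific problems with the sketched tail estimate. First, the two sharp-cutoff tails you introduce recombine into the single tail $-\sum_{n>x}\frac{a_f(n)\Lambda(n)}{n\log n}\frac{\log(x/n)}{\log x}$, i.e.\ you are really facing the tail of the smoothed sum — splitting it into a sharp head plus $\frac{1}{\log x}$ times another sharp head is a step backward, since sharp Perron (kernel $x^s/s$) gives weaker, slower-decaying errors and the residue at a zero is $x^{\rho-1}/(\rho-1)$, so the zero sum is not absolutely controlled by $\sum|\rho_f|^{-2}$. Second, the tail $\sum_{n>x}\frac{a_f(n)\Lambda(n)}{n\log n}$ is the tail of the Dirichlet series for $\log L(1+s,f)$, which has branch-point singularities at zeros, not simple poles, so a contour shift applied to it directly is not available. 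The paper sidesteps both issues at once: it applies the \emph{smoothed} Perron formula (kernel $x^s/s^2$) to $-\frac{L'}{L}(s+\sigma,f)$ for each $\sigma\ge 1$, picks up a double-pole residue $-(\frac{L'}{L})'(\sigma)-\frac{L'}{L}(\sigma)\log x$, simple poles at the zeros giving $\theta x^{1/2-\sigma}\sum_{\rho_f}|\rho_f|^{-2}$ (using $|\rho_f-\sigma|\ge|\rho_f|$ for $\sigma\ge 1$), and simple poles at the $d$ families of trivial zeros. Then one divides by $\log x$ and integrates over $\sigma$ from $1$ to $\infty$; the $\sigma$-integration turns $-(L'/L)'$ into $L'/L(1,f)$, $-L'/L$ into $\log L(1,f)$, and produces both extra $1/\log x$ factors, giving exactly the $\frac{1}{\sqrt{x}\log^2 x}$ and $\frac{1}{x\log^2 x}$ weights with $\sum_{\rho_f}|\rho_f|^{-2}=2|\tRe B(f)|$ in front of the first and the count $d$ of trivial-zero sequences in front of the second. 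Note in particular that the $2d\theta/(x\log^2 x)$ term comes from the trivial zeros, not from ``the remaining integral on the shifted line controlled by Ramanujan--Petersson'' as you suggest. To turn your outline into a proof you should adopt this smoothed-Perron-then-integrate structure rather than try to estimate two sharp tails separately.
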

\begin{proof}
 We have for any fixed $\sigma\ge 1$ that 
 \[\frac1{2\pi i}\int_{2-i\infty}^{2+i\infty}-\frac{L'}L(s+\s,f)\frac{x^s}{s^2}ds=\sum_{n\le x}\frac{a_f(n)\Lambda(n)}{n^{\s}}\log(\tfrac{x}n).\]
 Shifting the contour to the left, we see this integral is also equal to 
 \[-\left(\frac{L'}{L}\right)'(\s,f)-\frac{L'}{L}(\s,f)\log x-\sum_{\rho_f}\frac{x^{\rho_f-\s}}{(\rho_f-\s)^2}-\sum_{j=1}^d\sum_{m=0}^{\infty}\frac{x^{-2m-\kappa_j-\s}}{(2m+\kappa_j+\s)^2}.\]
 Thus, since $\tRe(\kappa_j)\ge0$ we have
 \begin{align*}
  -\frac{L'}{L}(\s,f)&=\sum_{n\le x}\frac{a_f(n)\Lambda(n)}{n^{\s}}\frac{\log(\tfrac{x}n)}{\log x}+\frac1{\log x}\left(\frac{L'}{L}\right)'(\s,f)\\
  &+\frac{\theta x^{\tfrac12-\s}}{\log x}\sum_{\rho_f}\frac1{|\rho_f|^2}+\frac{\theta  x^{-\s}}{\log x}\sum_{j=1}^d\sum_{m=0}^{\infty}\frac{x^{-2m}}{(2m+1)^2}.
 \end{align*}
We integrate both sides with respect to $\s$ from $1$ to $\infty$, then take real parts to obtain
\begin{align*}
 \log|L(1,f)|=\tRe\sum_{n\le  x}\frac{a_f(n)\Lambda(n)}{n\log n}\frac{\log(\tfrac{x}n)}{\log x}-\frac1{\log x}\tRe \frac{L'}{L}(1,f)+\frac{\theta}{\sqrt{x}\log^2x}\sum_{\rho_f}\frac1{|\rho_f|^2}+\frac{2d\theta}{x\log^2x}.
 \end{align*}
We note that $\sum_{\rho_f}\frac1{|\rho_f|^2}=2|\tRe B(f)|$. 
Now we have 
\begin{align*}
 -\frac{L'}{L}(1,f)&=\frac12\log q(f)-\frac{d}{2}\log\pi+\frac12\sum_{j=1}^d\frac{\G'}{\G}\left(\frac{1+\kappa_j}{2}\right)-\frac{\xi'}{\xi}(1,f)\\
\end{align*}
%
%
Hence, after taking real parts we have the desired result.
\end{proof}
\begin{lem}\label{boundforReBf}
 Let $d\ge 1$ be a fixed positive integer and let $L(s,f)$ be an $L$-function of degree $d$ with conductor $q(f)$. Suppose that GRH and Ramanujan-Petersson  hold for $L(s,f)$.  Define $0\le l(f)\le d$ to be the number of $\kappa_j$ in the gamma factor of $L(s,f)$ which equal $0$. 
 For any $x>1$ there exists a real number $|\theta|\le1$ such that 
 \begin{multline*}
  -\frac{\xi'}{\xi}(0,\overline{f})-\frac1{x}\frac{\xi'}{\xi}(0,f)+\frac{2\theta}{\sqrt{x}}|\tRe(B(f))|=\\
  \frac1{2}\log\left(\frac{q(f)}{\pi^d}\right)\left(1-\frac1x\right)-\sum_{n\le x}\frac{a_f(n)\Lambda(n)}{n}\left(1-\frac{n}x\right)+E(f,x),
 \end{multline*}
where 
\begin{multline*}
 E(f,x)=l(f)\left(-\log2-\frac{\g}2\left(1-\frac1x\right)+\frac{\log x+1}x -\sum_{n=1}^{\infty}\frac{x^{-2n-1}}{2n(2n+1)}\right)\\
 +\sum_{i=1}^{d-l(f)}\left(\frac1{2}\frac{\G'}{\G}(\frac{1+\kappa_i}2)-\frac1{2x}\frac{\G'}{\G}(\frac{\kappa_i}2)-\sum_{n=0}^{\infty}\frac{x^{-2n-\kappa_j-1}}{(2n+\kappa_i)(2n+\kappa_i+1)}\right),
\end{multline*}
In particular, $\left(1+\frac1{x}+\frac{2\theta}{\sqrt{x}}\right)|\tRe(B(f)|$ equals
\begin{align*}
\frac12\left(1-\frac1x\right)\left(\log\left(\frac{q(f)}{\pi^d}\right)+\tRe\sum_{j=1}^{d}\frac{\G'}{\G}\left(\frac{1+\kappa_j}2\right)\right)-\tRe\sum_{n\le x}\frac{a_f(n)\Lambda(n)}{n}\left(1-\frac{n}x\right)\\-(d\theta-(1+\theta)l(f))\sum_{n=1}^{\infty}\frac{x^{-2n-1}}{2n(2n+1)}+l(f)\frac{\log x+1}{x}
+\frac{(d-2l(f))\log2}x\\-\frac1{x}\sum_{i=1}^{d-l(f)}\tRe\left(\sum_{n=1}^{\infty}\left(\frac2{\kappa_i+1+2n}-\frac1{\kappa_i+1+n}\right)+\frac{x^{-\kappa_i}-1}{\kappa_i(\kappa_i+1)}\right).
\end{align*}
%
In both of the above expressions, the terms inside $\sum_{i=1}^{d-l(f)}$ are ranging over the local parameters at infinity, $\kappa_i\neq 0$.
\end{lem}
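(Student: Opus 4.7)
My strategy is Perron-style contour integration, parallel to the argument for Lemma~\ref{boundlogL1} but with the kernel $x^s/(s(s+1))$ producing the weight $1-n/x$.  For any $c>0$,
\[
\frac{1}{2\pi i}\int_{(c)}-\frac{L'}{L}(s+1,f)\,\frac{x^s}{s(s+1)}\,ds \;=\; \sum_{n\le x}\frac{a_f(n)\Lambda(n)}{n}\!\left(1-\frac{n}{x}\right),
\]
since $\frac{1}{2\pi i}\int_{(c)}y^s/(s(s+1))\,ds$ equals $1-1/y$ for $y>1$ and $0$ otherwise.  I shift the contour to the left, using convexity bounds on $\xi(s,f)$ to kill the horizontal pieces, and collect residues at $s=0$ (contributing $-L'/L(1,f)$), at $s=-1$ (simple in general but becoming a double pole when some $\kappa_j=0$, since then $1/(s+1)$ in the kernel coincides with the pole of $(1/2)\Gamma'/\Gamma(s/2)$), at the nontrivial zeros $s=\rho-1$, and at the trivial zeros $s=-2m-1-\kappa_j$.

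I rewrite each residue using the decomposition $-L'/L(s,f) = -\xi'/\xi(s,f)+\tfrac12\log(q/\pi^d)+\tfrac12\sum_{j}\Gamma'/\Gamma((s+\kappa_j)/2)$, together with the Hadamard identity $\xi'/\xi(0,f)=B(f)$ (from \eqref{HadL1} at $s=0$) and the functional-equation consequence $\xi'/\xi(1,f)=-\xi'/\xi(0,\bar f)=-B(\bar f)$.  A Laurent expansion at $s=-1$ (producing the double-pole contribution $l(f)(\log x+1)/x$) packages the $s=0,-1$ residues into $-\xi'/\xi(0,\bar f)-(1/x)\xi'/\xi(0,f)$, together with $\tfrac12(1-1/x)\log(q/\pi^d)$ and the gamma pieces constituting $E(f,x)$.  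The trivial-zero residues furnish the remaining pieces of $E(f,x)$: the $\kappa_j=0$ poles at $s=-2m-1$, $m\ge 1$, give $-l(f)\sum_{n\ge 1}x^{-2n-1}/(2n(2n+1))$, and the $\kappa_i\ne 0$ poles give the $\sum_{n\ge 0}x^{-2n-\kappa_i-1}/((2n+\kappa_i)(2n+\kappa_i+1))$ series.  The nontrivial-zero residues contribute $-\sum_\rho x^{\rho-1}/((\rho-1)\rho)$; on GRH, $|(\rho-1)\rho|=|\rho|^2$, and the identity $\sum_\rho 1/|\rho|^2 = 2|\tRe B(f)|$ from the proof of Lemma~\ref{boundlogL1} bounds this in modulus by $2x^{-1/2}|\tRe B(f)|$, which I encode as $(2\theta/\sqrt{x})|\tRe B(f)|$ with $|\theta|\le 1$.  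Rearranging yields the first display.

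For the second display I take real parts, using $B(\bar f)=\overline{B(f)}$ and $\tRe B(f)<0$ (GRH) to convert the left-hand side into $(1+1/x+2\theta/\sqrt{x})|\tRe B(f)|$.  On the right, I manipulate $\tRe E(f,x)$: I combine the $l(f)$ piece via $\Gamma'/\Gamma(1/2)=-\gamma-2\log 2$ to recast $l(f)[-\log 2-\gamma/2(1-1/x)]$ as $(l(f)/2)(1-1/x)\Gamma'/\Gamma(1/2)-l(f)\log 2/x$; for each $\kappa_i\ne 0$, I apply the digamma series identity $\Gamma'/\Gamma((1+z)/2)-\Gamma'/\Gamma(z/2)=\sum_{n\ge 0}2/((2n+z)(2n+z+1))$ to recast $(1/2)\Gamma'/\Gamma((1+\kappa_i)/2)-(1/(2x))\Gamma'/\Gamma(\kappa_i/2)$ as $(1/2)(1-1/x)\Gamma'/\Gamma((1+\kappa_i)/2)+(1/x)\sum_{n\ge 0}1/((2n+\kappa_i)(2n+\kappa_i+1))$.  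The elementary telescoping identity $\sum_{n\ge 1}[1/(2n+\kappa_i)+1/(2n+\kappa_i+1)-1/(n+\kappa_i+1)]=\log 2$ (whose partial sums collapse to $\sum_{k=N+2}^{2N+1}1/(k+\kappa_i)\to\log 2$, for any complex $\kappa_i$ with $\tRe\kappa_i>-1$) converts the $n\ge 1$ portion of that series into $\log 2 - \sum_{n\ge 1}[2/(\kappa_i+1+2n) - 1/(\kappa_i+1+n)]$; summing the $\log 2$ contributions over the $d-l(f)$ nonzero indices, together with $-l(f)\log 2/x$ from above, yields the $(d-2l(f))\log 2/x$ constant.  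Finally, the residual tails $\sum_{n\ge 1}x^{-2n-\kappa_i-1}/((2n+\kappa_i)(2n+\kappa_i+1))$ are majorised by $(d-l(f))\sum_{n\ge 1}x^{-2n-1}/(2n(2n+1))$ (using $\tRe\kappa_i\ge 0$) and combined with the nontrivial-zero error into the single parameter $\theta$, producing the displayed coefficient $-(d\theta-(1+\theta)l(f))$.

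The main obstacle is the double-pole Laurent expansion at $s=-1$ when $l(f)>0$, which requires identifying the regular part $h(0)$ mixing $B(f)$, $(1/2)\log(q/\pi^d)$, $l(f)\gamma/2$, and $(1/2)\sum_{j:\kappa_j\ne 0}\Gamma'/\Gamma(\kappa_j/2)$.  A secondary challenge is the algebraic rearrangement from the first display to the second, in particular verifying the $\log 2$ telescoping identity uniformly in complex $\kappa_i$ and tracking how the trivial-zero tail and the nontrivial-zero error combine into a single parameter $|\theta|\le 1$.
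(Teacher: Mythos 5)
Your proof is correct and is essentially the paper's argument: both are Perron-type contour integrations against the kernel $x^{s-1}/(s(s-1))$ (your kernel $x^s/(s(s+1))$ applied at $s+1$ is the same thing after a shift of variable), followed by the residue bookkeeping, GRH for the nontrivial-zero residues, and digamma identities to clean up $E(f,x)$. The one organizational difference is that you shift the contour for $-L'/L$ directly and therefore collect the trivial-zero residues at $s=-2m-1-\kappa_j$ along the way, then decompose the $s=0,-1$ residues via $-L'/L = -\xi'/\xi + \tfrac12\log(q/\pi^d) + \tfrac12\sum\Gamma'/\Gamma$; the paper instead shifts the contour for $\xi'/\xi$ (whose only poles are the nontrivial zeros and the kernel poles, giving the left side of the identity immediately), and then re-expresses the same integral as $I_1+I_2+I_3$ and shifts each gamma integral $I_2$ separately. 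For the passage from $E(f,x)$ to the second display you use the series identity $\frac{\Gamma'}{\Gamma}\bigl(\tfrac{1+z}{2}\bigr)-\frac{\Gamma'}{\Gamma}\bigl(\tfrac{z}{2}\bigr)=\sum_{n\ge0}\tfrac{2}{(2n+z)(2n+z+1)}$ plus a telescoping identity that gives the $\log 2$ term, whereas the paper routes through Legendre duplication and the $\Gamma'/\Gamma$ partial-fraction series; both are valid and land in the same place. One small wording slip: you attribute the coefficient $-(d\theta-(1+\theta)l(f))$ in part to ``combining with the nontrivial-zero error,'' but that error sits on the left-hand side as $(2\theta/\sqrt{x})|\tRe B(f)|$; the coefficient is produced entirely by absorbing the residual tails $\sum_{n\ge1}x^{-2n-\kappa_i-1}/((2n+\kappa_i)(2n+\kappa_i+1))$ for the $\kappa_i\ne0$ into a single $\theta\sum_{n\ge1}x^{-2n-1}/(2n(2n+1))$ and adding the exact $-l(f)\sum_{n\ge1}x^{-2n-1}/(2n(2n+1))$ from the $\kappa_j=0$ terms.
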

\begin{proof}
 We consider 
 \begin{equation*}
  I(f)=\frac1{2\pi i}\int_{2-i\infty}^{2+i\infty}\frac{\xi'}{\xi}(s,f)\frac{x^{s-1}}{s(s-1)}ds.
 \end{equation*}
Pulling the contour to the left we collect the residues of the poles at $s=0,1$ and $\rho_f$ the nontrivial zeros of $L(s,f)$. Hence,
\begin{align*}
  I(f)=\frac{\xi'}{\xi}(1,f)-\frac1x\frac{\xi'}{\xi}(0,f)+\sum_{\rho_f}\frac{x^{\rho_f-1}}{\rho_f(\rho_f-1)}.
 \end{align*}
Thus applying GRH we have for some $|\theta|\le 1$
\begin{equation*}
 I(f)=-\frac{\xi'}{\xi}(0,\overline{f})-\frac1x\frac{\xi'}{\xi}(0,f)+\frac{2\theta}{\sqrt{x}}|\tRe(B(f))|.
\end{equation*}
On the other hand, we can also write 
\begin{align*}
I(f)&=\frac1{2\pi i}\int_{2-i\infty}^{2+i\infty}\left(\frac12\log(q(f))+\frac{\g'}{\g}(s,f)+\frac{L'}{L}(s,f)\right)\frac{x^{s-1}}{s(s-1)}ds\\
&=\frac1{2\pi i}\int_{2-i\infty}^{2+i\infty}\frac12\log\left(\frac{q(f)}{\pi^d}\right)\frac{x^{s-1}}{s(s-1)}ds+\frac1{2\pi i}\int_{2-i\infty}^{2+i\infty}\frac12\sum_{j=1}^d\frac{\G'}{\G}\left(\frac{s+\kappa_j}2\right)\frac{x^{s-1}}{s(s-1)}ds\\
&\quad+\frac1{2\pi i}\int_{2-i\infty}^{2+i\infty}\frac{L'}{L}(s,f)\frac{x^{s-1}}{s(s-1)}ds\\
&=I_1+I_2+I_3.
\end{align*}
The contribution from $I_1$ and $I_3$ is 
\[\frac12\log\left(\frac{q(f)}{\pi^d}\right)\left(1-\frac1x\right)-\sum_{n\le x}\frac{a_f(n)\Lambda(n)}{n}\left(1-\frac{n}x\right).\]
We rewrite $I_2$ as 
\[I_2=\sum_{j=1}^d\frac1{2\pi i}\int_{2-i\infty}^{2+i\infty}\frac12\frac{\G'}{\G}\left(\frac{s+\kappa_j}2\right)\frac{x^{s-1}}{s(s-1)}ds.\]
Fix $j$, if $\kappa_j\neq0$ then the $j$-th term of the summand will have simple poles at 
$s=0,1$ and $s=-2n-\kappa_j$ for $n\ge0 $. Thus the contribution will be 
\[\frac12\frac{\G'}{\G}\left(\frac{1+\kappa_j}2\right)-\frac1{2x}\frac{\G'}{\G}\left(\frac{\kappa_j}2\right)-\sum_{n=0}^{\infty}\frac{x^{-2n-\kappa_j-1}}{(2n+\kappa_j)(2n+1+\kappa_j)}.\]
On the other hand, if $\kappa_j=0$ then the $j$-th term of the summand 
will have simple poles at $s=1$ and $s=-2n$ for $n\ge 1$, which contribute
\[\frac12\frac{\G'}{\G}\left(\frac12\right)-\sum_{n=1}^{\infty}\frac{x^{-2n-1}}{2n(2n+1)}.\]
Additionally, we know that 
\[\frac12\frac{\G'}{\G}\left(\frac{s}2\right)=-\frac1s+\frac12\frac{\G'}{\G}\left(\frac{s}2+1\right),\]
so the residue of the double pole  at $s=0$ is given by
\[\frac{1+\log x+\frac12\frac{\G'}{\G}(1)}{x}.\]
Using the fact that $\frac{\G'}{\G}(1)=-\g$ and  $\frac{\G'}{\G}(1/2)=-2\log2-\gamma$ we see the overall contribution will be 
%
\[-\log 2 -\frac{\g}2\left(1-\frac1x\right)+\frac{\log x+1}x-\sum_{n=1}^{\infty}\frac{x^{-2n-1}}{2n(2n+1)}.\]
Let $l(f)$ be as in the statement of the lemma. Then, reordering the $\kappa_j$ so that $\kappa_1,\kappa_2,\ldots,\kappa_{d-l(f)}$ are all nonzero and summing over $j$ we get the desired expression for $E(f,x)$.\\
Finally, since $-\tRe\frac{\xi'}{\xi}(0,\overline{f})=-\tRe\frac{\xi'}{\xi}(0,f)=|\tRe(B(f))|$, we see that taking real parts of the established identity we obtain 
\[\left(1+\frac1x+\frac{2\theta}{\sqrt{x}}\right)|Re(B(f))|=\frac12\log\left(\frac{q(f)}{\pi^d}\right)\left(1-\frac1x\right)-\tRe\sum_{n\le x}\frac{a_f(n)\Lambda(n)}{n}\left(1-\frac{n}x\right)+\tRe(E(f,x)).\] 
We find an explicit expression for the right hand side as follows: \\
Start by noting that for $\kappa_j=0$ we have 
\[\frac{\G'}{\G}\left(\frac12\right)=\frac{\G'}{\G}\left(\frac{1+\kappa_j}2\right),\]
so that 
\begin{align*}
 &\frac12\log\left(\frac{q(f)}{\pi^d}\right)\left(1-\frac1x\right)+E(f,x)=\frac12\log\left(\frac{q(f)}{\pi^d}\right)\left(1-\frac1x\right)+\sum_{j=1}^{d}\frac1{2}\frac{\G'}{\G}\left(\frac{1+\kappa_j}2\right)\\
 &+l(f)\left(\frac{\log x+1+\gamma/2}x -\sum_{n=1}^{\infty}\frac{x^{-2n-1}}{2n(2n+1)}\right)
 -\sum_{i=1}^{d-l(f)}\left(\frac1{2x}\frac{\G'}{\G}\left(\frac{\kappa_i}2\right)+\sum_{n=0}^{\infty}\frac{x^{-2n-\kappa_i-1}}{(2n+\kappa_i)(2n+\kappa_i+1)}\right).
\end{align*}
We note that for some $|\theta|\le 1$
\[\sum_{n=0}^{\infty}\frac{x^{-2n-\kappa_j-1}}{(2n+\kappa_i)(2n+\kappa_i+1)}=\frac{x^{-\kappa_i}}{x\kappa_i(\kappa_j+1)}+\theta\sum_{n=1}^{\infty}\frac{x^{-2n-1}}{2n(2n+1)},\]
hence 
\begin{align}\label{predigammabnd}
 &\frac12\log\left(\frac{q(f)}{\pi^d}\right)\left(1-\frac1x\right)+E(f,x)=\frac12\log\left(\frac{q(f)}{\pi^d}\right)\left(1-\frac1x\right)+\sum_{j=1}^{d}\frac1{2}\frac{\G'}{\G}\left(\frac{1+\kappa_j}2\right)\\
\nonumber&-(d\theta-(1+\theta)l(f))\sum_{n=1}^{\infty}\frac{x^{-2n-1}}{2n(2n+1)}+l(f)\frac{\log x+\g/2+1}{x}-\frac1{x}\sum_{i=1}^{d-l(f)}\left(\frac12\frac{\G'}{\G}\left(\frac{\kappa_i}{2}\right)+\frac{x^{-\kappa_i}}{\kappa_i(\kappa_i+1)}\right).
\end{align}
Now, from the functional equation of $\Gamma(s)$ we see that 
\[\frac12\frac{\G'}{\G}\left(\frac{\kappa_i}{2}\right)=\frac12\frac{\G'}{\G}\left(\frac{\kappa_i}2+1\right)-\frac1{\kappa_i},\]
we recall Legendre's duplication formula 
\[\Gamma(s)\Gamma(s+\tfrac12)=2^{1-2s}\log(\sqrt{\pi})\Gamma(2s),\]
so we have 
\[\frac12\frac{\G'}{\G}\left(\frac{\kappa_i}2+1\right)=-\log2+\frac{\G'}{\G}(\kappa_i+1)-\frac12\frac{\G'}{\G}\left(\frac{\kappa_i+1}{2}\right).\]
Finally, we note 
\[\frac{\G'}{\G}(s)=-\g-\frac{1}s-\sum_{n=1}^{\infty}\left(\frac1{s+n}-\frac1{n}\right),\]
so that
\[\left[\frac{\G'}{\G}(\kappa_i+1)-\frac12\frac{\G'}{\G}\left(\frac{\kappa_i+1}{2}\right)\right]-\frac12\frac{\G'}{\G}\left(\frac{\kappa_i+1}{2}\right)=\frac1{\kappa_i+1}+\sum_{n=1}^{\infty}\frac2{\kappa_i+1+2n}-\frac1{\kappa_i+1+n}.\]
Combinging these facts gives \eqref{predigammabnd} as 
\begin{align}\label{predigammabnd2}
&\frac12\log\left(\frac{q(f)}{\pi^d}\right)\left(1-\frac1x\right)+E(f,x)=\frac12\log\left(\frac{q(f)}{\pi^d}\right)\left(1-\frac1x\right)+\sum_{j=1}^{d}\frac1{2}\frac{\G'}{\G}\left(\frac{1+\kappa_j}2\right)\\
\nonumber&-(d\theta-(1-\theta)l(f))\sum_{n=1}^{\infty}\frac{x^{-2n-1}}{2n(2n+1)}+l(f)\frac{\log x+\g/2+1}{x}\\
\nonumber&-\frac1{x}\sum_{i=1}^{d-l(f)}\left(\frac12\frac{\G'}{\G}\left(\frac{\kappa_i+1}2\right)-\log2+\sum_{n=1}^{\infty}\left(\frac2{\kappa_i+1+2n}-\frac1{\kappa_i+1+n}\right)+\frac{x^{-\kappa_i}-1}{\kappa_i(\kappa_i+1)}\right).
\end{align}
Then since $\frac12\frac{\G'}{\G}\left(\frac{\kappa_i+1}2\right)=\frac12\frac{\G'}{\G}(\frac{1}2)=-\log 2 -\g/2$ when $\kappa_i=0$ we add $\frac{-l(f)\frac{\G'}{\G}(\frac12)+l(f)\frac{\G'}{\G}(\frac12)}{2x}$ so that the RHS of \eqref{predigammabnd2} is given by
\begin{align*}
\frac12\left(1-\frac1x\right)\left(\log\left(\frac{q(f)}{\pi^d}\right)+\sum_{j=1}^{d}\frac{\G'}{\G}\left(\frac{1+\kappa_j}2\right)\right)-(d\theta-(1+\theta)l(f))\sum_{n=1}^{\infty}\frac{x^{-2n-1}}{2n(2n+1)}+l(f)\frac{\log x+1}{x}\\
+\frac{(d-2l(f))\log2}x-\frac1{x}\sum_{i=1}^{d-l(f)}\left(\sum_{n=1}^{\infty}\left(\frac2{\kappa_i+1+2n}-\frac1{\kappa_i+1+n}\right)+\frac{x^{-\kappa_i}-1}{\kappa_i(\kappa_i+1)}\right).
\end{align*}
Taking real parts gives the desired result. 
\end{proof}
\subsection{Bounds for the Digamma Function}
The following are some technical lemmas which help to shorten the proof of the main results. The first is taken from V. Chandee. 

\begin{lem}\cite[Lemma 2.3]{Chandee}\label{Chandee}
Let $z=x+iy$, where $x\ge \frac14$. Then 
\[\tRe\frac{\G'}{\G}(z)\le \log|z|.\]
\end{lem}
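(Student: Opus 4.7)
The plan is to derive the bound from Binet's first integral representation of the digamma function: for $\tRe(z)>0$,
\[\frac{\G'}{\G}(z)=\log z-\frac{1}{2z}-2\int_0^{\infty}\frac{t\,dt}{(t^2+z^2)(e^{2\pi t}-1)}.\]
Taking real parts of both sides, and using $\tRe\log z=\log|z|$ together with $\tRe(1/(2z))=x/(2|z|^2)$, the inequality we want becomes equivalent to
\[\frac{x}{2|z|^2}+2\,\tRe\int_0^{\infty}\frac{t\,dt}{(t^2+z^2)(e^{2\pi t}-1)}\ge 0.\]
Under the hypothesis $x\ge 1/4$ the first summand is strictly positive, so all the work goes into controlling the sign of the integral.

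A direct computation yields
\[\tRe\frac{t}{t^2+z^2}=\frac{t(t^2+x^2-y^2)}{(t^2+x^2-y^2)^2+4x^2y^2},\]
which is pointwise non-negative when $t^2\ge y^2-x^2$ but can be negative otherwise. When $|y|\le x$ the integrand is everywhere $\ge 0$ and the lemma is immediate, so the nontrivial case (by conjugation symmetry, take $y>x>0$) is $|y|>x$. There I would apply the partial-fraction identity $\tfrac{2t}{t^2+z^2}=\tfrac{1}{t+iz}+\tfrac{1}{t-iz}$ and the substitutions $s=t\pm y$, recasting the real part of the integral as a combination of three one-sided integrals against the monotone weight $(e^{2\pi s}-1)^{-1}$: two non-negative contributions on $[0,\infty)$ and one non-positive contribution localised on $[0,y]$. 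The negative piece can then be controlled by monotonicity of the weight and by the elementary estimate $e^{2\pi s}-1\ge 2\pi s$ on $[0,y]$.

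The main obstacle will be arranging the cancellation tightly enough so that the hypothesis $x\ge 1/4$---rather than some larger constant---suffices. This is not a consequence of any pointwise estimate on a single piece; one has to combine the boundary term $x/(2|z|^2)$ with the positive contribution of the integral in order to absorb the deficit from the negative piece near the vanishing point of $t^2+x^2-y^2$. An alternative, possibly cleaner, route is to observe that $F(z):=\log|z|-\tRe\frac{\G'}{\G}(z)$ is harmonic on $\{\tRe(z)>0\}$ and, by Stirling, satisfies $F(z)\to 0$ as $|z|\to\infty$; by the minimum principle it then suffices to verify $F\ge 0$ on the boundary line $\tRe(z)=1/4$, which reduces the problem to a one-dimensional inequality that can be checked directly from the series $\tRe\frac{\G'}{\G}(z)=-\g+\sum_{n\ge 0}(\tfrac{1}{n+1}-\tfrac{n+x}{(n+x)^2+y^2})$.
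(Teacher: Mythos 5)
Note first that the paper cites this lemma from Chandee and provides no in-paper proof, so there is nothing internal to compare against. Your starting point is the natural one and is correct as far as it goes: Binet's first integral for $\psi$, its real part, and the reduction of the claim to
\[\frac{x}{2|z|^2}+2\,\tRe\int_0^{\infty}\frac{t\,dt}{(t^2+z^2)(e^{2\pi t}-1)}\ge 0,\]
together with the observation that the case $|y|\le x$ is immediate. Neither of your two continuations, however, actually closes the argument. The partial-fraction route has a concrete technical flaw as written: after $\frac{2t}{t^2+z^2}=\frac{1}{t+iz}+\frac{1}{t-iz}$ and the shifts $s=t\pm y$, the ``positive piece starting at $s=y$'' and the ``negative piece on $[0,y]$'' each inherit a non-integrable singularity of $(e^{2\pi(s-y)}-1)^{-1}$ at $s=y$. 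This is the original endpoint $t=0$: there the damping factor $t$ in the Binet integrand, which tames the $t^{-1}$ blow-up of $(e^{2\pi t}-1)^{-1}$, has been split between the two partial fractions, leaving each with $\frac{1}{t\pm iz}\to\frac{1}{\pm iz}\neq 0$. So the three pieces are not individually convergent and cannot be estimated separately ``by monotonicity of the weight.'' Beyond this, you yourself flag that the quantitative absorption of the negative mass by $\frac{x}{2|z|^2}$ down to the threshold $x\ge\tfrac14$ is ``the main obstacle''; that absorption is the entire content of the lemma and is not carried out.

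The harmonic-function alternative is structurally sound: $F(z)=\log|z|-\tRe\psi(z)$ is harmonic on $\{\tRe z>0\}$, Stirling gives $F(z)\to 0$ uniformly on $\{\tRe z\ge \tfrac14\}$ as $|z|\to\infty$, and a minimum-principle (Phragm\'en--Lindel\"of) argument on $\{\tRe z\ge\tfrac14,\ |z|\le R\}$ then correctly reduces the lemma to $F(\tfrac14+iy)\ge 0$ for all real $y$. But that one-variable inequality is exactly where the constant $\tfrac14$ is decided, and ``can be checked directly from the series'' is an assertion, not a verification. It is also not routine: the margin is small for moderate $|y|$ (on the order of $0.07$ at $y=1$), and for large $|y|$ the leading correction $\frac{x}{2|z|^2}$ competes at the same order with the next Stirling term $\tRe\frac{1}{12z^2}=-\frac{x^2-y^2}{12|z|^4}$, so one needs an explicit remainder bound (or a verified numerical check on a compact window, with an asymptotic tail argument) before concluding. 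Until that boundary inequality is actually established, this route too is only a reduction. In short, the framing and both strategies are reasonable, but there is a genuine, partly self-acknowledged gap at the decisive quantitative step in each.
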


\begin{lem}\label{techlem1}
Let $\kappa=\sigma+it$ such that $\s\ge 0$, then 
\begin{align*}
\tRe\left(\sum_{n=1}^{\infty}\left(\frac2{\kappa+1+2n}-\frac1{\kappa+1+n}\right)\right)&=\frac12\log4+\frac{\s^2+3\s+2+t^2}{(\s+2)^2+t^2}-\frac{\s^2+4\s+3+t^2}{(\s+3)^2+t^2}+\frac12\log\left(\frac{(\s+2)^2+t^2}{(\s+3)^2+t^2}\right).
\end{align*}
 \end{lem}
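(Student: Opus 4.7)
My plan is to reduce the sum to a difference of digamma values via the standard series for $\psi$, apply Legendre's duplication formula to produce the $\tfrac12\log 4$ term, and then take the real part of the remaining half-step digamma difference to obtain the rational and logarithmic contributions.

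\textbf{Step 1.} Using $\tfrac{2}{\kappa+1+2n}=\tfrac{1}{(\kappa+1)/2+n}$ together with the identity $\psi(b+1)-\psi(a+1)=\sum_{n\ge 1}\bigl(\tfrac{1}{n+a}-\tfrac{1}{n+b}\bigr)$ applied with $a=(\kappa+1)/2$ and $b=\kappa+1$, I first recognize
\[\sum_{n=1}^{\infty}\left(\frac{2}{\kappa+1+2n}-\frac{1}{\kappa+1+n}\right)=\psi(\kappa+2)-\psi\left(\frac{\kappa+3}{2}\right).\]

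\textbf{Step 2.} Applying Legendre's duplication formula $\psi(2z)=\tfrac12\psi(z)+\tfrac12\psi(z+\tfrac12)+\log 2$ at $z=(\kappa+2)/2$ collapses this to
\[\log 2+\frac12\left[\psi\left(\frac{\kappa+2}{2}\right)-\psi\left(\frac{\kappa+3}{2}\right)\right],\]
which accounts for the $\tfrac12\log 4$ in the claim.

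\textbf{Step 3.} I take real parts of the remaining half-step digamma difference. Writing $\kappa=\sigma+it$ and using the identity $\tRe\psi(u+iv)=-\gamma+\sum_{n\ge 0}\bigl(\tfrac{1}{n+1}-\tfrac{n+u}{(n+u)^2+v^2}\bigr)$ applied to the two arguments, clearing the common factor $\tfrac14$ in the denominators rewrites the difference as $2\sum_{n=0}^{\infty}\tRe\bigl(\tfrac{1}{\kappa+3+2n}-\tfrac{1}{\kappa+2+2n}\bigr)$. I then isolate the $n=0$ piece and use the algebraic identity
\[\tRe\frac{\kappa+1}{\kappa+j}=\frac{\sigma^2+(j+1)\sigma+j+t^2}{(\sigma+j)^2+t^2}\qquad(j=2,3)\]
to convert it into the two rational terms appearing on the stated right-hand side.

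\textbf{Step 4.} The remaining tail with $n\ge 1$ should yield the logarithmic piece $\tfrac12\log\tfrac{(\sigma+2)^2+t^2}{(\sigma+3)^2+t^2}=\tRe\log\tfrac{\kappa+2}{\kappa+3}$. I plan to obtain this by regrouping the telescoping series $\sum_{n\ge 1}\bigl(\tfrac{1}{\kappa+3+2n}-\tfrac{1}{\kappa+2+2n}\bigr)$ through the integral representation $\log\tfrac{\kappa+2}{\kappa+3}=-\int_0^1\tfrac{du}{\kappa+2+u}$ and exchanging the order of the integral with the sum after a change of variables, taking real parts at the end.

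\textbf{Main obstacle.} The subtle step is Step 4: the series for the tail is only conditionally convergent after taking real parts, so the argument needs either a carefully justified Abel summation or a comparison with the Weierstrass product representation of $\Gamma$ (through which $\log\Gamma(\tfrac{\kappa+2}{2})-\log\Gamma(\tfrac{\kappa+3}{2})$ acquires its $\log$ asymptotic) to guarantee that the $n=0$ rational part and the tail logarithm recombine exactly into the stated closed form without stray correction terms.
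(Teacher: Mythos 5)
Your Steps 1--2 are correct, and they are in fact the cleanest way to see what the sum really is: writing $\tfrac{2}{\kappa+1+2n}=\tfrac{1}{n+(\kappa+1)/2}$ gives $\sum_{n\ge1}\bigl(\tfrac{2}{\kappa+1+2n}-\tfrac{1}{\kappa+1+n}\bigr)=\psi(\kappa+2)-\psi\bigl(\tfrac{\kappa+3}{2}\bigr)=\log2+\tfrac12\bigl[\psi\bigl(\tfrac{\kappa+2}{2}\bigr)-\psi\bigl(\tfrac{\kappa+3}{2}\bigr)\bigr]$. But this reduction also shows that Steps 3--4 cannot be completed. The leftover quantity is $\tfrac12\bigl[\psi\bigl(\tfrac{\kappa+2}{2}\bigr)-\psi\bigl(\tfrac{\kappa+3}{2}\bigr)\bigr]=\sum_{n\ge0}\bigl(\tfrac{1}{\kappa+3+2n}-\tfrac{1}{\kappa+2+2n}\bigr)$, whose $n=0$ term is $\tRe\tfrac{-1}{(\kappa+2)(\kappa+3)}$, whereas the two rational terms in the statement equal $\tRe\bigl(\tfrac{\kappa+1}{\kappa+2}-\tfrac{\kappa+1}{\kappa+3}\bigr)=\tRe\tfrac{\kappa+1}{(\kappa+2)(\kappa+3)}$; and the $n\ge1$ tail (which converges absolutely, its terms being $O(n^{-2})$, so the convergence issue you flag as the main obstacle is not the problem) is not equal to $\tRe\log\tfrac{\kappa+2}{\kappa+3}$. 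The ``stray correction terms'' you hoped would vanish are genuinely nonzero: at $\kappa=0$ your evaluation gives $\psi(2)-\psi(3/2)=2\log2-1\approx0.386$, while the right-hand side of the lemma equals $\log2+\tfrac12-\tfrac13+\log\tfrac23=2\log2-\log3+\tfrac16\approx0.454$; the discrepancy $\tfrac76-\log3$ already appears at order $1/\kappa$ in the asymptotics. So no amount of care in Step 4 can produce the stated equality, because the stated closed form is not the exact value of the sum.

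For comparison, the paper's proof evaluates the two partial sums ``by partial summation,'' which in effect compares them with the corresponding integrals: the $\tfrac12\log4$ and the logarithmic ratio are the limits of the integral terms and the rational expressions are endpoint evaluations, while the sum-minus-integral errors are discarded (the displayed partial-sum formulas also contain typos). Your digamma computation thus exposes that the displayed identity holds only approximately, which is exactly why your Steps 3--4 hit a wall. The constructive way to finish along your lines is to prove the exact formula $\tRe\sum_{n\ge1}\bigl(\tfrac{2}{\kappa+1+2n}-\tfrac{1}{\kappa+1+n}\bigr)=\log2-\sum_{n\ge0}\tRe\tfrac{1}{(\kappa+2+2n)(\kappa+3+2n)}$ and then bound the series: for $\sigma\ge0$ one has $\bigl|\tRe\tfrac{1}{(\kappa+2+2n)(\kappa+3+2n)}\bigr|\le\tfrac{1}{(2n+2)(2n+3)}$ and $\sum_{n\ge0}\tfrac{1}{(2n+2)(2n+3)}=1-\log2$, so the quantity in the lemma lies between $2\log2-1$ and $1$. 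Those two-sided bounds are all that is used later in the proofs of the main theorems (where the lemma's value is bounded above by $1$ and below by $2\log2-\log3$), so your approach can replace the lemma, but as an inequality rather than the stated equality.
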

\begin{proof}
We take the real part inside the sum and focus on the individual partial sums given by
\[\sum_{n=1}^N\frac{2(\s+1+2n)}{(\s+1+2n)^2+t^2} \text{ and } \sum_{n=1}^N\frac{(\s+1+n)}{(\s+1+n)^2+t^2}. \]
Using partial summation we find 
\begin{align*}
\sum_{n=1}^N\frac{2(\s+1+2n)}{(\s+1+2n)^2+t^2}&=\frac{2N(\s+1+2N)+\s^2+2\s(N+1)+2Y+1+t^2}{(\s+1+2N)^2+t^2}-\frac{\s^2+4\s+3+t^2}{(\s+3)^2+t^2}\\
&+\frac12\log((\s+1+2N)^2+t^2)-\frac12\log((\s+3)^2+t^2) \\
\text{ and }&\,\\
\sum_{n=1}^N\frac{(\s+1+n)}{(\s+1+n)^2+t^2}&=\frac{N(\s+1+N)+\s^2+\s(N+1)+\s+N+1+t^2}{(\s+1+N)^2+t^2}-\frac{\s^2+3\s+2+t^2}{(\s+2)^2+t^2}\\
&+\frac12\log((\s+1+N)^2+t^2)-\frac12\log((\s+2)^2+t^2).
\end{align*}
Taking the limit as $N\to \infty$ we see 
\begin{align*}
\tRe\left(\sum_{n=1}^{\infty}\left(\frac2{\kappa+1+2n}-\frac1{\kappa+1+n}\right)\right)&=\frac12\log4+\frac{\s^2+3\s+2+t^2}{(\s+2)^2+t^2}-\frac{\s^2+4\s+3+t^2}{(\s+3)^2+t^2}+\frac12\log\left(\frac{(\s+2)^2+t^2}{(\s+3)^2+t^2}\right),
\end{align*}
as was claimed.
\end{proof}
\begin{lem}\label{techlem2}
Let $\kappa=s+it$ such that $\s\ge0$, and $x>1$ then 
\[\left|\frac{x^{-\kappa}-1}{\kappa(\kappa+1)}\right|\le\frac{2\log x}{\log3}.\]
\end{lem}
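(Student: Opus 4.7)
The plan is to reduce the estimate to a straightforward bound on $|x^{-\kappa}-1|$ via an integral representation, and then use $\s\ge 0$ to lower-bound $|\kappa+1|$.

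First, I would parametrize the segment from $0$ to $\kappa$ by $t\mapsto t\kappa$, $t\in[0,1]$, and write
\[
x^{-\kappa}-1 \;=\; \int_0^1\frac{d}{dt}x^{-t\kappa}\,dt \;=\; -\kappa\log x\int_0^1 x^{-t\kappa}\,dt.
\]
Because $|x^{-t\kappa}|=x^{-t\s}\le 1$ whenever $x>1$, $\s\ge 0$ and $t\in[0,1]$, the integral has modulus at most $1$, yielding the clean inequality $|x^{-\kappa}-1|\le |\kappa|\log x$.

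Next, I would cancel the $|\kappa|$ factor with the $\kappa$ in the denominator and lower-bound $|\kappa+1|$. Since $\kappa=\s+it$ with $\s\ge 0$,
\[
|\kappa+1|\;\ge\;\tRe(\kappa+1)\;=\;\s+1\;\ge\;1,
\]
which gives
\[
\left|\frac{x^{-\kappa}-1}{\kappa(\kappa+1)}\right|\;\le\;\frac{\log x}{|\kappa+1|}\;\le\;\log x.
\]
The claimed bound $\tfrac{2\log x}{\log 3}$ is then weaker than what we have, because $\log 3<2$ forces $\log x\le\tfrac{2\log x}{\log 3}$, and the lemma follows.

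The only mild subtlety is the indeterminate case $\kappa=0$, where both numerator and denominator vanish; the estimate should then be read as its limiting value $\log x$ (obtained via L'Hôpital), and in any case this lemma is applied in the proof of Lemma~\ref{boundforReBf} only for $\kappa_i\ne 0$, so no real obstacle arises. No step here presents a genuine difficulty, which suggests that the constant $2/\log 3$ is chosen for stylistic compatibility with neighbouring estimates in the paper rather than being forced by the argument.
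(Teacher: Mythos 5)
Your proof is correct, and it is a genuinely different argument from the paper's, yielding a strictly stronger bound. The paper splits into two cases according to the size of $|\kappa|$ relative to $c/\log x$: for $|\kappa|\ge c/\log x$ it bounds $|x^{-\kappa}-1|\le 2$ trivially and $|\kappa(\kappa+1)|\ge|\kappa|\ge c/\log x$, giving $2\log x/c$; for $|\kappa|< c/\log x$ it expands $x^{-\kappa}-1$ as a power series and bounds the tail, giving $\tfrac{e^c-1}{c}\log x$. The choice $c=\log 3$ equalizes the two constants at $2/\log 3\approx 1.82$. Your integral representation
\[
x^{-\kappa}-1=-\kappa\log x\int_0^1 x^{-t\kappa}\,dt
\]
collapses both cases into one line and delivers the cleaner bound $|x^{-\kappa}-1|\le|\kappa|\log x$, hence
\[
\left|\frac{x^{-\kappa}-1}{\kappa(\kappa+1)}\right|\le\frac{\log x}{|\kappa+1|}\le\log x,
\]
which is better than $2\log x/\log 3$. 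Your reading of the constant $2/\log 3$ as merely "stylistic" is not quite right — it is the optimal constant obtainable from the paper's particular two-case method (balancing $2/c$ against $(e^c-1)/c$) — but your method sidesteps the case split altogether and shows the sharper constant $1$ works. This slightly stronger version would propagate to marginally better numerical constants downstream, though the paper does not pursue that. The remark about the removable singularity at $\kappa=0$ is fine and matches how the lemma is invoked (only for $\kappa_i\neq 0$).
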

\begin{proof}
We consider two cases. \\
First suppose  $|\kappa|\ge \frac{c}{\log x}$ then we can trivially bound the norm to obtain
\[\left|\frac{x^{-\kappa}-1}{\kappa(\kappa+1)}\right|\le \frac{2\log x}c.\]
 If $|\kappa|< \frac{c}{\log x}$ then 
 \[x^{-\kappa}-1=\sum_{k=1}^{\infty}\frac{(-\kappa\log x)^k}{k!},\] so that 
 \[\left|\frac{x^{-\kappa}-1}{\kappa(\kappa+1)}\right|\le\log x\sum_{k=1}^{\infty}\frac{c^{k-1}}{k!}= \frac{e^c-1}c\log x.\]

The choice of $c=\log3 $ gives the desired result.
\end{proof}

\subsection{Relevant Results from \cite{LamLiSound}.}
Let  $$B=-\sum_{\rho}\tRe\frac1{\rho}=\frac12\log(4\pi)-1-\frac{\g}2,$$
where the sum is taken over the non-trivial zeros of the Riemann zeta function.
\begin{lem}\cite[Lemma 2.4]{LamLiSound}
\label{LamLiSound2.4}
Assume the Riemann Hypothesis. For $x>1$ we have, for some $|\theta|\le 1$, 
\[\sum_{n\le x}{\Lambda(n)}{n}\left(1-\frac{n}{x}\right)=\log x-(1+\gamma)+\frac{2\pi}x-\sum_{n=1}^{\infty}\frac{x^{-2n-1}}{2n(2n+1)}+2\frac{\theta|B|}{\sqrt{x}}.\]
\end{lem}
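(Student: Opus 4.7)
The lemma is the Riemann zeta analogue of Lemma \ref{boundforReBf} with $d=1$, $l(f)=1$, $q(f)=1$, except that the pole of $\zeta(s)$ at $s=1$ contributes an extra residue to the contour integral. My plan is a direct contour-shift proof. I begin by considering
\[I = \frac{1}{2\pi i}\int_{c-i\infty}^{c+i\infty}\left(-\frac{\zeta'}{\zeta}(s+1)\right)\frac{x^{s}}{s(s+1)}\,ds\]
for any $c>0$. Expanding $-\zeta'/\zeta(s+1)=\sum_{n\ge 1}\Lambda(n)n^{-s-1}$ and applying the standard identity $\frac{1}{2\pi i}\int_{(c)}y^{s}/(s(s+1))\,ds=(1-1/y)\mathbf{1}_{y\ge 1}$ termwise with $y=x/n$, I identify $I$ with the left-hand side $\sum_{n\le x}\Lambda(n)/n\cdot(1-n/x)$.

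Next I shift the contour leftward past all poles of the integrand. These are: a double pole at $s=0$ (from the simple pole of $-\zeta'/\zeta(s+1)$ at $s=0$ combined with the $1/s$ kernel factor); a simple pole at $s=-1$; simple poles at $s=\rho-1$ for each non-trivial zero $\rho$ of $\zeta$; and simple poles at $s=-2k-1$ for $k\ge 1$ coming from the trivial zeros. Using the Laurent expansion $-\zeta'/\zeta(s+1)=1/s-\gamma+O(s)$ together with $x^{s}/(s+1)=1+(\log x-1)s+O(s^{2})$ near $s=0$, the double-pole residue at $s=0$ evaluates to $\log x-(1+\gamma)$. The residue at $s=-1$ is $\zeta'/\zeta(0)/x$, which equals $\log(2\pi)/x$ by the classical value $\zeta'(0)/\zeta(0)=\log 2\pi$. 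The trivial-zero residues sum to exactly $-\sum_{k\ge 1}x^{-2k-1}/(2k(2k+1))$, and the non-trivial zeros contribute $-\sum_{\rho}x^{\rho-1}/(\rho(\rho-1))$.

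For the final step, under the Riemann Hypothesis each non-trivial zero satisfies $\rho=\tfrac{1}{2}+i\g_\rho$, so $|x^{\rho-1}|=x^{-1/2}$ and $\rho(\rho-1)=-|\rho|^{2}$, giving
\[\Bigl|\sum_{\rho}\frac{x^{\rho-1}}{\rho(\rho-1)}\Bigr|\le\frac{1}{\sqrt x}\sum_{\rho}\frac{1}{|\rho|^{2}}=\frac{2|B|}{\sqrt x},\]
where the last equality uses $|B|=-B=\sum_\rho\tRe(1/\rho)=\tfrac{1}{2}\sum_\rho|\rho|^{-2}$ (by pairing $\rho\leftrightarrow\bar\rho$). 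Because this sum is real by the $\rho\leftrightarrow 1-\rho$ symmetry provided by the functional equation, it can be encoded as $2\theta|B|/\sqrt x$ with real $|\theta|\le 1$, which produces the claim. The main technicality is justifying the leftward shift: one needs the classical bounds on $\zeta'/\zeta$ along vertical segments between the trivial zeros, together with the observation $|x^s|\to 0$ as $\tRe(s)\to-\infty$ for $x>1$. The only other slightly fiddly step is the double-pole residue at $s=0$, handled by the Taylor expansion above.
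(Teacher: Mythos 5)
Your argument is correct and is essentially the same contour-shift proof that \cite{LamLiSound} give: working with $-\zeta'/\zeta(s+1)$ and the kernel $x^{s}/(s(s+1))$ is, after the substitution $s\mapsto s+1$, the same integral as one built around $\xi'/\xi(s)\,x^{s-1}/(s(s-1))$, and your residue computations at $s=0$ (giving $\log x-1-\gamma$), at the trivial zeros, and at the nontrivial zeros (using $\rho(\rho-1)=-|\rho|^2$ and $\sum_{\rho}|\rho|^{-2}=2\sum_{\rho}\tRe(1/\rho)=2|B|$ under RH, with reality from the $\rho\leftrightarrow\bar\rho$ pairing) are all in order. One point you should flag explicitly: your residue at $s=-1$ correctly comes out to $\log(2\pi)/x$, via $\zeta'(0)/\zeta(0)=\log 2\pi$, whereas the lemma as printed here reads $2\pi/x$. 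That is a transcription misprint in the present paper --- the statement in \cite{LamLiSound} indeed has $\log(2\pi)/x$ --- so your computation exposes the typo rather than revealing a gap in your argument.
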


\begin{lem}\cite[Lemma 2.6]{LamLiSound}
\label{LamLiSound2.6}
Assume the Riemann Hypothesis. For all $x\ge e$ we have 
\[\sum_{n\le x}\frac{\Lambda(n)}{n\log n}\frac{\log(x/n)}{\log x}=\log\log x -\g-1+\frac{\g}{\log x}+\frac{2|B|\theta}{\sqrt{x}\log^2x}+\frac{\theta}{3x^3\log^2x}.\]
\end{lem}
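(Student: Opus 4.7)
The plan is to reduce to Lemma~\ref{LamLiSound2.4} via the integral identity
\[
\frac{\log(x/n)}{\log x \cdot \log n} = \int_n^x \frac{du}{u\log^2 u} \qquad (n\ge 2),
\]
verified by antidifferentiating with $-1/\log u$. Substituting this representation and swapping the (finite) sum with the integral gives
\[
\sum_{n\le x}\frac{\Lambda(n)}{n\log n}\frac{\log(x/n)}{\log x} = \int_2^x \frac{\Psi(u)}{u\log^2 u}\,du, \qquad \Psi(u):=\sum_{n\le u}\frac{\Lambda(n)}{n},
\]
so the task reduces to obtaining an explicit asymptotic for $\Psi(u)$ under RH.

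For $\Psi(u)$, I would write $\Psi(u) = A(u) + \psi(u)/u$ where $A(u)=\sum_{n\le u}\frac{\Lambda(n)}{n}(1-n/u)$ is exactly the quantity handled by Lemma~\ref{LamLiSound2.4}, and $\psi(u)=\sum_{n\le u}\Lambda(n)$ satisfies, under RH, the explicit formula $\psi(u) = u - \sum_\rho u^\rho/\rho - \log(2\pi) - \frac{1}{2}\log(1-u^{-2})$, from which $\psi(u)/u = 1 + O(\log^2 u/\sqrt u)$ with an effective constant. Summing the two pieces yields $\Psi(u) = \log u - \g + E(u)$, where $E(u)$ inherits the $2|B|\t/\sqrt u$ term from Lemma~\ref{LamLiSound2.4} together with the RH error from $\psi$.

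Plugging back into the integral, the elementary antiderivatives $\int \frac{\log u}{u\log^2 u}\,du = \log\log u$ and $\int \frac{du}{u\log^2 u} = -1/\log u$ supply the main terms $\log\log x$ and $\g/\log x$. The stated error pieces arise from integrating $E(u)$ against the kernel $1/(u\log^2 u)$: a short integration-by-parts argument converts the $2|B|\t/\sqrt u$ term into a contribution of size $2|B|\t/(\sqrt x\log^2 x)$, and the tail series $\sum_n u^{-2n-1}/(2n(2n+1))$ becomes the $\t/(3x^3\log^2 x)$ piece after term-by-term integration. The main obstacle is the explicit bookkeeping with numerical (not $o(1)$) constants: one has to carry the boundary contributions from the lower endpoint of integration and match them against the constant term coming from $\psi(u)/u - 1$ and from the $-(1+\g)$ in $A(u)$, so that the collection of additive constants collapses to the clean expression in the statement. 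A slicker, contour-based alternative would apply Perron's formula to $\log\zeta(s+1)\cdot x^s/s^2$ and extract the contribution at the logarithmic branch $s=0$ (where $\log\zeta(s+1) = -\log s + \g s + O(s^2)$) via differentiating $\frac{1}{2\pi i}\int \frac{x^s}{s^\alpha}ds = (\log x)^{\alpha-1}/\Gamma(\alpha)$ in $\alpha$; this produces the $\log\log x$ and $\g$ constants directly but requires more careful contour bookkeeping near the zeros of $\zeta$.
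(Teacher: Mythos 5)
The paper does not prove this lemma; it is quoted verbatim from \cite{LamLiSound}, so there is no internal argument to compare against. Your reduction of the weighted prime sum to $\int_2^x \Psi(u)\,\frac{du}{u\log^2 u}$ with $\Psi(u)=\sum_{n\le u}\Lambda(n)/n$, via $\frac{\log(x/n)}{\log x\log n}=\int_n^x\frac{du}{u\log^2 u}$, is correct and is a genuinely different, real-variable route from the contour computation I would expect in the source; the main terms $\log\log x$ and $\gamma/\log x$ do emerge as you describe.

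There is, however, a concrete gap in the error term. Once you replace $\psi(u)/u-1$ by its $O$-bound $O(\log^2 u/\sqrt u)$ and integrate against the kernel, the tail is of size $\int_x^\infty u^{-3/2}\,du\asymp x^{-1/2}$, a full factor of $\log^2 x$ larger than the claimed $\frac{2|B|\theta}{\sqrt x\log^2 x}$; no integration by parts recovers this after the bound has already been taken. The same difficulty affects the $\frac{2\theta|B|}{\sqrt u}$ term imported from Lemma~\ref{LamLiSound2.4}: its $\theta$ depends on $u$, so $\int_2^\infty$ of it against the kernel is just some absolute constant bounded by $2|B|\int_2^\infty u^{-3/2}(\log u)^{-2}\,du$, and cannot be made to cancel exactly against the $-\log\log 2-\gamma/\log 2$ produced at the lower endpoint; for an explicit identity one needs the identity, not the bound. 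What saves the plan is to keep the two zero-sums exact and combine them before estimating: the nontrivial-zero contributions to $A(u)$ and to $\psi(u)/u$ are $-\sum_\rho\frac{u^{\rho-1}}{\rho(\rho-1)}$ and $-\sum_\rho\frac{u^{\rho-1}}{\rho}$, and their sum collapses to $-\sum_\rho\frac{u^{\rho-1}}{\rho-1}$, i.e.\ the direct explicit formula for $\Psi(u)$. Integrating by parts termwise, $\int_2^x\frac{u^{\rho-2}}{\log^2 u}\,du=\Bigl[\frac{u^{\rho-1}}{(\rho-1)\log^2 u}\Bigr]_2^x+\cdots$, and only then summing over $\rho$ gives, at $u=x$ under RH, $\sum_\rho|\rho-1|^{-2}=\sum_\rho|\rho|^{-2}=2|B|$, which is precisely the $\frac{2|B|\theta}{\sqrt x\log^2 x}$ of the statement, while the lower endpoint and secondary integrals feed into the constant. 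The missing idea is exactly this: carry out the $u$-integration by parts inside the sum over $\rho$ before taking absolute values, not after.
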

We also prove the following lemma which is a slight generalization of \cite[Lemma 5.1]{LamLiSound}. 
\begin{lem}\label{generalizedLLS5.1}
Assume the Ramanujan-Petersson conjecture. Then for $x\ge 100$ we have 
\begin{equation}\label{ineq1}
\tRe\sum_{n\le  x}\a_{j,f}(n)\Lambda(n)\left(\frac1{n\log n}-\frac1{x\log x}\right)\ge \sum_{p^k\le  x}\Lambda(p^k)(-1)^k\left(\frac1{p^k\log p^k}-\frac1{x\log x}\right).\end{equation}
In particular, we have 
\begin{equation}\label{ineq2}
\tRe\sum_{n\le  x}a_f(n)\Lambda(n)\left(\frac1{n\log n}-\frac1{x\log x}\right)\ge d\sum_{p^k\le  x}\Lambda(p^k)(-1)^k\left(\frac1{p^k\log p^k}-\frac1{x\log x}\right).\end{equation}
\end{lem}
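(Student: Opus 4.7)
Since $\alpha_{j,f}(n)\Lambda(n)$ and $a_f(n)\Lambda(n)$ are supported on prime powers, with $\alpha_{j,f}(p^k)=\alpha_{j,f}(p)^k$ and $a_f(p^k)=\sum_{j=1}^d\alpha_{j,f}(p)^k$, the ``in particular'' inequality \eqref{ineq2} follows from \eqref{ineq1} by summing over $j$. I therefore focus on \eqref{ineq1}. Using $\Lambda(p^k)\bigl(\tfrac{1}{p^k\log p^k}-\tfrac{1}{x\log x}\bigr)=\tfrac{1}{kp^k}-\tfrac{\log p}{x\log x}$ and the fact that the sum factorizes over primes, the claim reduces to showing, for each prime $p$, the local inequality
\begin{equation*}
T_p(\alpha)\;:=\;\sum_{k=1}^{K_p}\Big(\tfrac{1}{kp^k}-\tfrac{\log p}{x\log x}\Big)\bigl(\tRe(\alpha^k)-(-1)^k\bigr)\;\ge\;0,
\end{equation*}
where $K_p:=\lfloor\log x/\log p\rfloor$, $\alpha:=\alpha_{j,f}(p)$ satisfies $|\alpha|\le 1$ under Ramanujan--Petersson, and the weights in parentheses are nonnegative for $p^k\le x$.

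For primes $p>\sqrt{x}$ one has $K_p=1$, and $T_p(\alpha)=\bigl(\tfrac{1}{p}-\tfrac{\log p}{x\log x}\bigr)\bigl(\tRe(\alpha)+1\bigr)\ge 0$ is immediate. For the finitely many primes $p\le\sqrt{x}$, where $K_p\ge 2$, the key tool is the \emph{un-truncated} Euler-factor identity
\begin{equation*}
\sum_{k=1}^{\infty}\frac{\tRe(\alpha^k)-(-1)^k}{kp^k}\;=\;\log\frac{1+1/p}{|1-\alpha/p|}\;\ge\;0,
\end{equation*}
which is a direct consequence of the triangle inequality $|1-\alpha/p|\le 1+|\alpha|/p\le 1+1/p$. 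I would rewrite $T_p(\alpha)$ as this nonnegative infinite sum, minus the tail $\sum_{k>K_p}\bigl(\tRe(\alpha^k)-(-1)^k\bigr)/(kp^k)$, minus the correction $\tfrac{\log p}{x\log x}\sum_{k\le K_p}\bigl(\tRe(\alpha^k)-(-1)^k\bigr)$, and bound the two error terms using $|\tRe(\alpha^k)-(-1)^k|\le 2$ together with the geometric estimate $\sum_{k>K_p}p^{-k}\le p/((p-1)x)$ and $K_p\le\log x/\log p$.

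The main obstacle is that the crude estimate $|\tRe(\alpha^k)-(-1)^k|\le 2$ is too weak near the equality case $\alpha=-1$, where the main term vanishes to first order. The remedy is the sharper bound $|\tRe(\alpha^k)-(-1)^k|\le 2k\sqrt{1+\tRe(\alpha)}$, which follows from $|1-\beta^k|\le k|1-\beta|$ applied to $\beta=-\alpha$ together with $|1+\alpha|^2\le 4(1+\tRe(\alpha))$. This lets the positive $k=1$ contribution $\tRe(\alpha)+1$ absorb both error terms for the finitely many primes with $K_p\ge 2$, once $x\ge 100$. As an alternative one can reduce by the maximum principle to the boundary $|\alpha|=1$, parametrize $\alpha=e^{i\theta}$ and verify the alternating trigonometric identity $\sum_{k=1}^{K_p}(-1)^{k+1}\bigl(\tfrac{1}{kp^k}-\tfrac{\log p}{x\log x}\bigr)\sin^2\bigl(k(\theta-\pi)/2\bigr)\ge 0$ directly using the rapid decay of the weights in $k$ for each small prime.
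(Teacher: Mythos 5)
Your reduction to a local inequality for each prime and each $\alpha=\alpha_{j,f}(p)$, and the derivation of \eqref{ineq2} from \eqref{ineq1} by summing over $j$, match the paper. The untruncated Euler-factor identity $\sum_{k\ge 1}\frac{\tRe(\alpha^k)-(-1)^k}{kp^k}=\log\frac{1+1/p}{|1-\alpha/p|}\ge 0$ is correct and is a nice observation that the paper does not use. However, there is a genuine gap in how you propose to close the argument for small primes.

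The problem is the order of vanishing near $\alpha=-1$. Writing $\beta=-\alpha$ and $s^2=1+\tRe(\alpha)=1-\tRe(\beta)$, the $k=1$ contribution is $\bigl(\tfrac1p-\tfrac{\log p}{x\log x}\bigr)s^2$, and the full Euler-factor main term also satisfies $M_p=-\log\bigl|1-\tfrac{1+\alpha}{p+1}\bigr|\approx \tfrac{s^2}{p+1}$: both are \emph{quadratic} in the distance to the equality case. Your proposed sharpening $|\tRe(\alpha^k)-(-1)^k|\le 2k\sqrt{1+\tRe(\alpha)}=2ks$ is only \emph{linear} in $s$, so the error terms it yields (the tail $\sum_{k>K_p}$ and the $\tfrac{\log p}{x\log x}$ correction) are of order $s$ and therefore overwhelm the available positivity of order $s^2$ as $s\to 0$. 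The $k=1$ term cannot absorb them, contrary to what you claim, and $M_p\ge0$ alone doesn't help because $M_p$ itself is $O(s^2)$. What is actually needed is the quadratic bound
\begin{equation*}
\bigl|\tRe(\alpha^k)-(-1)^k\bigr|\;=\;1-\tRe(\beta^k)\;\le\;k^2\bigl(1-\tRe(\beta)\bigr)\;=\;k^2\bigl(1+\tRe(\alpha)\bigr),
\end{equation*}
which is precisely the paper's inequality \eqref{trigineq}; your chain $1-\tRe(\beta^k)\le|1-\beta^k|\le k|1-\beta|$ loses a square at the step $1-\tRe(\beta^k)\le|1-\beta^k|$.

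Even with the correct $k^2$-bound, a uniform ``keep $k=1$, bound the rest'' estimate fails at $p=2$: for $p=2$ one has $\sum_{k\ge2}k^2/(k\cdot 2^k)>\tfrac12$, so the error dominates. The paper avoids this by retaining the positive odd-$k$ terms (not just $k=1$) and bounding only the even-$k$ terms, which already works for $p\ge 3$; for $p=2$ it handles $1\le k\le 5$ exactly by a finite trigonometric computation and uses \eqref{trigineq} only for $k\ge 6$. Your alternative route via the minimum principle for the harmonic function $\alpha\mapsto T_p(\alpha)$ is a valid reduction to $|\alpha|=1$ and does recover the paper's trigonometric formulation (up to an overall factor of $2$), but the assertion that it is then settled ``directly using the rapid decay of the weights'' glosses over exactly this $p=2$ obstruction, where the decay $2^{-k}$ is not fast enough to beat the $k^2$ growth of $\sin^2(k\psi/2)$ without explicit casework.
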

\begin{proof}
Note that if $x$ is a prime power then the summand at $x$ on both sides of the inequality \eqref{ineq1} contribute $0$, so we assume $x$ is not a prime power. We begin by recalling that $a_f(n)=0$ unless $n=p^k$ is a prime power in which case $a_f(n)=\sum_{j=1}^d\alpha_{j,f}(p)^k$. So that \eqref{ineq2} follows immediately, once we prove \eqref{ineq1}.\\
\indent Fix $j$ and consider each  $\alpha_{j,f}$ separately. From the definition we see $\alpha_{j,f}(n)$ is only nonzero if $n=p^k$ for some prime power. If $\alpha_{j,f}(p)=0$ then the contribution is $0$ while the value on the right hand side $<0$. If $\alpha_{j,f}(p)\neq0$ then, from Ramanujan-Petersson we have that $|\alpha_{j,f}(p)|\le 1$, so we express $\alpha_{j,f}(p)=-re(\theta)$, for $0<r\le 1$ where $e(\theta)=e^{2\pi i\theta}$.  Consider the difference of the left and right side of \eqref{ineq1}:
\begin{equation}\label{diffpos}
\log(p)\sum_{p^k\le x}(-1)^{k-1}(1-r^k\cos(k\theta))\left(\frac1{p^k\log p^k}-\frac1{x\log x}\right).
\end{equation}
If we establish this is non-negative, then we are finished.\\
Before we proceed we see that for all $k\ge 1$
\begin{equation}\label{trigineq}
1-r^k\cos(k\theta)\le k^2(1-r\cos\theta). 
\end{equation}
The case $k=1$ is trivial, for the remaining $k\ge 2$, the inequality follows from
\[k^2-1\ge 3>r^k+r\ge r^k\cos(k\theta)-r\cos(\theta).\]
If $p\ge3$, then by \eqref{trigineq} we have \eqref{diffpos} is greater than
\[\log(p)(1-r\cos\theta)\left(\frac1{p\log p}-\frac1{x\log x}-\sum_{j=1}^{\infty}\frac{(2j)^2}{p^{2j}\log p^{2j}}\right)\ge 0.\]
For $p=2$, when $k\ge 6$ we apply \eqref{trigineq} again. Otherwise, when $1\le k\le5$ we compute the trigonometric polynomial exactly. A little computer computation completes the result.
\end{proof}

\section{Proof of Theorems \ref{mainup} and \ref{mainlow}}
\subsection{Upper bounds for $L(1,f)$.}
Let $C(f)\ge 10^{10}$ and $x\ge 132$, be a real number to be chosen later. Lemma \ref{boundlogL1} says 
\begin{align*}
\log|L(1,f)|\le& \tRe\sum_{n\le x}\frac{a_f(n)\Lambda(n)}{n\log n}\frac{\log(x/n)}{\log x}+\frac1{2\log x}\left(\log\frac{q(f)}{\pi^d}+\tRe\sum_{j=1}^d\frac{\G'}{\G}\left(\frac{\kappa_i+1}2\right)\right)\\ 
&+\frac{2d}{x\log^2x}-\left(\frac1{\log x}-\frac2{\sqrt{x}\log^2 x}\right)|\tRe B(f)|.
\end{align*}
Applying Lemma \ref{boundforReBf} with the conditions on $x$ as above, we see 
\begin{multline*}
|\tRe B(f)|\ge \left(1+\frac1{\sqrt{x}}\right)^{-2}\left(\frac12\left(1-\frac1x\right)\left(\log\frac{q(f)}{\pi^d}+\tRe\sum_{j=1}^d\frac{\G'}{\G}\left(\frac{\kappa_j+1}2\right)\right)-\tRe\sum_{n\le x}\frac{a_f(n)\Lambda(n)}{n}\left(1-\frac{n}x\right)\right.\\
\left.+l(f)\frac{\log x+1}x+\frac{(d-2l(f))\log2}{x}-(d-2l(f))\sum_{n=1}^{\infty}\frac{x^{-2n-1}}{2n(2n+1)}\right.\\
\left.-\frac1x\sum_{i=1}^{d-l(f)}\tRe\left(\sum_{n=1}^{\infty}\left(\frac2{\kappa_i+1+2n}-\frac1{\kappa_i+1+n}\right)+\frac{x^{-\kappa_i}-1}{\kappa_i(\kappa_i+1)}\right)\right).
\end{multline*}
For $x\ge 132$ we bound
\begin{multline*}
-\left(\frac1{\log x}-\frac{2}{\sqrt{x}\log^2 x}\right)\left(1+\frac1{\sqrt{x}}\right)^{-2}\left(l(f)\frac{\log x+1}{x}+\frac{(d-2l(f))\log2}{x}\right.\\
\left.-(d-2l(f))\sum_{n=1}^{\infty}\frac{x^{-2n-1}}{2n(2n+1)}-\frac1x\sum_{i=1}^{d-l(f)}\tRe\left(\sum_{n=1}^{\infty}\left(\frac2{\kappa_i+1+2n}-\frac1{\kappa_i+1+n}\right)+\frac{x^{-\kappa_i}-1}{\kappa_i(\kappa_i+1)}\right)\right)+\frac{2d}{x\log^2 x}\\
=-\left(\frac1{\log x}-\frac{2}{\sqrt{x}\log^2 x}\right)\left(1+\frac1{\sqrt{x}}\right)^{-2}(A_1+A_2-A_3-A_4-A_5)+\frac{2d}{x\log^2x}.
\end{multline*}
First, we consider 
\[A_4=\frac1x\sum_{i=1}^{d-l(f)}\tRe\left(\sum_{n=1}^{\infty}\left(\frac2{\kappa_i+1+2n}-\frac1{\kappa_i+1+n}\right)\right).\]
Fix $i$ and study the inner sum, writing $\kappa_i=\sigma+it$, and noting that Ramanujan-Petersson gives us $\s\ge 0$,  we apply Lemma \ref{techlem1} so that 
\begin{align*}
\tRe\left(\sum_{n=1}^{\infty}\left(\frac2{\kappa_i+1+2n}-\frac1{\kappa_i+1+n}\right)\right)&=\frac12\log4+\frac{\s^2+3\s+2+t^2}{(\s+2)^2+t^2}-\frac{\s^2+4\s+3+t^2}{(\s+3)^2+t^2}+\frac12\log\left(\frac{(\s+2)^2+t^2}{(\s+3)^2+t^2}\right)\\
&\le \log2-\frac45(\sqrt{3}-2)\le 1.
\end{align*}
The inequality comes from the following facts. First, the last term is negative. Next, taking $\s\ge 0$, a maple calculation finds that $-\frac45(\sqrt{3}-2)$ is a global maximum for 
\[\frac{\s^2+3\s+2+t^2}{(\s+2)^2+t^2}-\frac{\s^2+4\s+3+t^2}{(\s+3)^2+t^2}.\]
Thus we may combine the terms $A_2$ and $A_4$ to obtain 
\[-\left(\frac1{\log x}-\frac2{\sqrt{x}\log^2x}\right)\left(1+\frac1{\sqrt{x}}\right)^{-2}(A_2-A_4)\le \frac{(d-l(f))(1-\log2)+l(f)\log 2}{(1+\sqrt{x})^2\log x}.\]
For $A_5$, fix $i$, then writing $\kappa_i=\sigma+it$, since we have $\s\ge 0$, we  apply Lemma \ref{techlem2} to obtain 
\[\tRe\left(\frac{x^{-\kappa_i}-1}{\kappa_j(\kappa_i+1)}\right)\le\frac{2\log x}{\log3}.\]
Thus combining $A_1$ and $A_5$ we have
\[-\left(\frac1{\log x}-\frac2{\sqrt{x}\log^2x}\right)\left(1+\frac1{\sqrt{x}}\right)^{-2}(A_1-A_5)\le \frac{(2d/\log3-l(f)(1+2/\log3))}{(1+\sqrt{x})^2}-\frac{l(f)}{(1+\sqrt{x})^2\log x}.\]
Finally, for $x\ge 132$ we have 
\begin{align*}
&-\left(\frac1{\log x}-\frac{2}{\sqrt{x}\log^2 x}\right)\left(1+\frac1{\sqrt{x}}\right)^{-2}\left(A_1+A_2-A_3-A_4-A_5\right)+\frac{2d}{x\log^2 x}\\
&\le\frac1{(1+\sqrt{x})^2}\left(2d/\log3-l(f)(1+2/\log3)+\frac{(d-2l(f))(1-\log2+\sum_{n=1}^{\infty}\frac{x^{-2n}}{2n(2n+1)})}{\log x}+\frac{2d(1+\sqrt{x})^2}{x\log^2x}\right)\\ &\le\frac{2d}{(1+\sqrt{x})^2}.
\end{align*}
Hence, 
\begin{align*}
\log|L(1,f)|\le& \tRe\sum_{n\le x}\frac{a_f(n)\Lambda(n)}{n\log n}\frac{\log(x/n)}{\log x}+\frac1{2\log x}\left(\log\frac{q(f)}{\pi^d}+\tRe\sum_{j=1}^d\frac{\G'}{\G}\left(\frac{\kappa_i+1}2\right)\right)\\ 
&+\left(\frac1{\log x}-\frac{2}{\sqrt{x}\log^2 x}\right)\left(1+\frac1{\sqrt{x}}\right)^{-2}\tRe\sum_{n\le x}\frac{a_f(n)\Lambda(n)}{n}\left(1-\frac{n}x\right)+\frac{2d}{(1+\sqrt{x})^2}\\
&-\left(\frac1{\log x}-\frac{2}{\sqrt{x}\log^2 x}\right)\left(1+\frac1{\sqrt{x}}\right)^{-2}\frac12\left(1-\frac1x\right)\left(\log\frac{q(f)}{\pi^d}+\tRe\sum_{j=1}^d\frac{\G'}{\G}\left(\frac{\kappa_j+1}2\right)\right).
\end{align*}
Next, note that 
\[0\le\frac1{2\log x}-\left(\frac1{\log x}-\frac{2}{\sqrt{x}\log^2 x}\right)\left(1+\frac1{\sqrt{x}}\right)^{-2}\frac12\left(1-\frac1x\right)\le \frac1{(\sqrt{x}+1)\log x}\left(1+\frac1{\log x}\right),\]
and Lemma \ref{Chandee} gives
\[\tRe\frac{\G'}{\G}\left(\frac{\kappa_j+1}2\right)\le\log\left|\frac{1+\kappa_i}2\right|,\]
so 
\[\log\frac{q(f)}{\pi^d}+\tRe\sum_{j=1}^d\frac{\G'}{\G}\left(\frac{\kappa_i+1}2\right)\le\log C(f).\]
Therefore, 
\begin{align*}
\log|L(1,f)|\le& \tRe\sum_{n\le x}\frac{a_f(n)\Lambda(n)}{n\log n}\frac{\log(x/n)}{\log x}+\frac{\log C(f)}{(\sqrt{x}+1)\log x}\left(1+\frac1{\log x}\right)\\ 
&+\left(\frac1{\log x}-\frac{2}{\sqrt{x}\log^2 x}\right)\left(1+\frac1{\sqrt{x}}\right)^{-2}\tRe\sum_{n\le x}\frac{a_f(n)\Lambda(n)}{n}\left(1-\frac{n}x\right)+\frac{2d}{(1+\sqrt{x})^2}.
\end{align*}
%

The right hand side of the above is largest when $a_f(p)=d$ for all $p\le x$, thus 
\begin{multline*}
\log|L(1,f)|\le d\tRe\sum_{n\le x}\frac{\Lambda(n)}{n\log n}\frac{\log(x/n)}{\log x}+\frac{d}{\log x}\tRe\sum_{n\le x}\frac{\Lambda(n)}{n}\left(1-\frac{n}x\right)
+\frac{\log C(f)}{(\sqrt{x}+1)\log x}\left(1+\frac{1}{\log x}\right)+\frac{2d}{(1+\sqrt{x})^2}.
\end{multline*}
%
So applying Lemmas \ref{LamLiSound2.4} and \ref{LamLiSound2.6} and choosing $x=\frac{\log^2C(f)}{4d^2}$ (which implies $\frac{\log C(f)}{\sqrt{x}}=2d$ and allows us to factor $d$ from each term) we obtain 
\begin{equation*}
\log|L(1,f)|\le d\left(\log\log x+\g-\frac{1}{\log x}+\frac{2}{(1+\sqrt{x})^2}\right)+\frac{\log C(f)}{\sqrt{x}\log x}\left(1+\frac{1}{\log x}\right).
\end{equation*}
Thus for $x\ge 132$ we have
\begin{align*}
\log|L(1,f)|&\le d\left(\log\log x+\g+\frac1{\log x}+\frac{2}{\log^2x}+\frac{2}{(1+\sqrt{x})^2}\right)\\
&\le d\left(\log\log x+\g+\frac1{\log x}+\frac{2.31}{\log^2x}\right).
\end{align*}
Therefore, 
\[|L(1,f)|\le e^{d\g }\log^dx\left(1+\frac{d}{\log x}+\frac{dK(d)}{\log^2x}\right)\]
where $K(d)=2.31+(1+\frac{4.62}{\log x}+\frac{(2.31)^2}{\log^2x})\sum_{k=0}^{\infty}\frac{d^{k+1}}{(k+2)!}(\frac1{\log x}+\frac{2.31}{\log^2x})^k$.  Replacing $x$ gives 
\begin{align}\label{upbndprelim}
\nonumber|L(1,f)|&\le 2^de^{d\g}\left((\log\log C(f)-\log2d)^d+\frac{d}2(\log\log C(f)-\log2d)^{d-1}+\frac{dK(d)}4(\log\log C(f)-\log2d)^{d-2}\right),
\end{align}
which proves the result.
\subsection{Lower bounds for $L(1,f)$}
The argument proceeds similarly. As before we let $C(f)$ be chosen such that  $x=\frac{\log^2 C(f)}{4d^2}\ge 132$, then from Lemma \ref{boundlogL1} we have 
\begin{align*}
 \log|L(1,f)|&\ge\tRe\sum_{n\le  x}\frac{a_f(n)\Lambda(n)}{n\log n}\frac{\log(\tfrac{x}n)}{\log x}+\frac1{2\log x}\left(\log\frac{q(f)}{\pi^d}+\tRe\sum_{j=1}^d\frac{\G'}{\G}\left(\frac{1+\kappa_j}2\right)\right)\\&
 -\left(\frac{1}{\log x}+\frac{2}{\sqrt{x}\log^2x}\right)|\tRe B(f)| -\frac{2d}{x\log^2x}.
\end{align*}
Applying Lemma \ref{boundforReBf} we see 
\begin{multline*}
|\tRe(B(f))|\le \left(1-\frac1{\sqrt{x}}\right)^{-2}\left(\frac12\left(1-\frac1x\right)\left(\log\left(\frac{q(f)}{\pi^d}\right)+\tRe\sum_{j=1}^{d}\frac{\G'}{\G}\left(\frac{1+\kappa_j}2\right)\right)-\tRe\sum_{n\le x}\frac{a_f(n)\Lambda(n)}{n}\left(1-\frac{n}x\right)\right.\\
\left. -d\sum_{n=1}^{\infty}\frac{x^{-2n-1}}{2n(2n+1)}+l(f)\frac{\log x+1}{x}
+\frac{(d-2l(f))\log2}x-\frac1{x}\sum_{i=1}^{d-l(f)}\tRe\left(\sum_{n=1}^{\infty}\left(\frac2{\kappa_i+1+2n}-\frac1{\kappa_i+1+n}+\frac{x^{-\kappa_i}-1}{\kappa_i(\kappa_i+1)}\right)\right)\right).
\end{multline*}

For $x\ge 132$ we bound 
\begin{multline*}
-\left(\frac{1}{\log x}+\frac{2}{\sqrt{x}\log^2x}\right)\left(1-\frac1{\sqrt{x}}\right)^{-2}\left(l(f)\frac{\log x+1}{x}
+\frac{(d-2l(f))\log2}x-d\sum_{n=1}^{\infty}\frac{x^{-2n-1}}{2n(2n+1)}\right.\\
\left. -\frac1x\sum_{i=1}^{d-l(f)}\tRe\left(\sum_{n=1}^{\infty}\left(\frac2{\kappa_i+1+2n}-\frac1{\kappa_i+1+n}\right)\right) +\frac{x^{-\kappa_i}-1}{\kappa_i(\kappa_i+1)}\right)-\frac{2d}{x\log^2x}\\
=-\left(\frac{1}{\log x}+\frac{2}{\sqrt{x}\log^2x}\right)\left(1-\frac1{\sqrt{x}}\right)^{-2}(A_1+A_2-A_3-A_4-A_5)-\frac{2d}{x\log^2x}.
\end{multline*}
First, we consider 
\[A_4=\frac1x\sum_{i=1}^{d-l(f)}\tRe\left(\sum_{n=1}^{\infty}\left(\frac2{\kappa_i+1+2n}-\frac1{\kappa_i+1+n}\right)\right).\]
Fix $i$ and study the inner sum, writing $\kappa_i=\sigma+it$, and noting that Ramanujan-Petersson gives us $\s\ge 0$,  we apply Lemma \ref{techlem1} so that 
\begin{align*}
\tRe\left(\sum_{n=1}^{\infty}\left(\frac2{\kappa_i+1+2n}-\frac1{\kappa_i+1+n}\right)\right)&=\frac12\log4+\frac{\s^2+3\s+2+t^2}{(\s+2)^2+t^2}-\frac{\s^2+4\s+3+t^2}{(\s+3)^2+t^2}+\frac12\log\left(\frac{(\s+2)^2+t^2}{(\s+3)^2+t^2}\right)\\
&\ge 2\log2-\log(3).
\end{align*}
The inequality comes from the following facts. First, the combination of the second and third term is positive since $\s\ge 0$, and the last term has a global minimum at the point $(0,0)$ which gives $\log(2/3)$. 
Thus we may combine the terms $A_2$ and $A_4$ to obtain 
\[-\left(\frac1{\log x}+\frac2{\sqrt{x}\log^2x}\right)\left(1-\frac1{\sqrt{x}}\right)^{-2}(A_2-A_4)\ge 1.04\frac{d(\log 2-\log 3)+l(f)\log 3}{(\sqrt{x}-1)^2\log x}.\]
For $A_5$, fix $j$, then writing $\kappa_j=\sigma+it$ and invoking Ramanujan-Petersson, we can apply Lemma \ref{techlem2} to obtain 
\[\tRe\left(\frac{x^{-\kappa_j}-1}{\kappa_j(\kappa_j+1)}\right)\ge-\frac{2\log x}{\log3}.\]
Thus combining the terms $A_1$ and $A_5$ we have
\[-\left(\frac1{\log x}+\frac2{\sqrt{x}\log^2x}\right)\left(1-\frac1{\sqrt{x}}\right)^{-2}(A_1-A_5)\ge -1.04\left(\frac{2d/\log3+l(f)(2/\log(3)-1)}{(\sqrt{x}-1)^2}+\frac{l(f)}{(\sqrt{x}-1)^2\log x}\right).\]
Finally, for $x\ge 132$ we have 
\begin{align*}
-\left(\frac{1}{\log x}+\frac{2}{\sqrt{x}\log^2x}\right)\left(1-\frac1{\sqrt{x}}\right)^{-2}(A_1+A_2-A_3-A_4-A_5)-\frac{2d}{x\log^2x}\\
\ge \frac1{(\sqrt{x}-1)^2}1.04\left(-(2d/\log 3-l(f)(2d/\log(3)-1))-\frac{2d}{1.04x\log^2x}(\sqrt{x}-1)^2\right. \\
\left. +\frac{d(\log2-\log3)+l(f)(\log(3)-1)-d\sum_{n=1}^{\infty}\frac{x^{-2n-1}}{2n(2n+1)}}{\log x}\right) \ge \frac{-2.05d}{(\sqrt{x}-1)^2}.
\end{align*}

Thus, for $x\ge 132$ 
\begin{multline*}
\log|L(1,f)|\ge\tRe\sum_{n\le  x}\frac{a_f(n)\Lambda(n)}{n\log n}\frac{\log(\tfrac{x}n)}{\log x}+\frac1{2\log x}\left(\log\frac{q(f)}{\pi^d}+\tRe\sum_{j=1}^d\frac{\G'}{\G}\left(\frac{1+\kappa_j}2\right)\right)\\
 -\left(\frac{1}{\log x}+\frac{2}{\sqrt{x}\log^2x}\right)\left(1-\frac1{\sqrt{x}}\right)^{-2}\left(\frac12\left(1-\frac1x\right)\left(\log\left(\frac{q(f)}{\pi^d}\right)+\tRe\sum_{j=1}^{d}\frac{\G'}{\G}\left(\frac{1+\kappa_j}2\right)\right)\right. \\
 \left.-\tRe\sum_{n\le x}\frac{a_f(n)\Lambda(n)}{n}\left(1-\frac{n}x\right)\right)-\frac{2.05 d}{(\sqrt{x}-1)^2}.
\end{multline*}
We note that 
\begin{multline*}
\left(\log\frac{q(f)}{\pi^d}+\tRe\sum_{j=1}^d\frac{\G'}{\G}\left(\frac{1+\kappa_j}2\right)\right)\left(\frac1{2\log x}-\left(\frac{1}{\log x}+\frac{2}{\sqrt{x}\log^2x}\right)\left(1-\frac1{\sqrt{x}}\right)^{-2}\left(\frac12\left(1-\frac1x\right)\right)\right)\\
\ge -\left(\log\frac{q(f)}{\pi^d}+\tRe\sum_{j=1}^d\frac{\G'}{\G}\left(\frac{1+\kappa_j}2\right)\right)\frac1{(\sqrt{x}-1)\log x}\left(1+\frac{1+1/\sqrt{x}}{\log x}\right)\\
\ge -\frac{\log C(f)}{(\sqrt{x}-1)\log x}\left(1+\frac{1+1/\sqrt{x}}{\log x}\right). 
\end{multline*}
Where the last inequality comes from Lemma \ref{Chandee}.
So far, we have proven 
\begin{multline}\label{prelimlowbound}
\log|L(1,f)|\ge\tRe\sum_{n\le  x}\frac{a_f(n)\Lambda(n)}{n\log n}\frac{\log(\tfrac{x}n)}{\log x}-\frac{\log C(f)}{(\sqrt{x}-1)\log x}\left(1+\frac{1+1/\sqrt{x}}{\log x}\right)\\
+\left(\frac{1}{\log x}+\frac{2}{\sqrt{x}\log^2x}\right)\left(1-\frac1{\sqrt{x}}\right)^{-2}\tRe\sum_{n\le x}\frac{a_f(n)\Lambda(n)}{n}\left(1-\frac{n}x\right)-\frac{2.05d}{(\sqrt{x}-1)^2}.
\end{multline}
To continue, we see from Lemma \ref{LamLiSound2.4} if $x\ge 132$ we have 
\begin{equation*}
\tRe\sum_{n\le x}\frac{a_f(n)\Lambda(n)}{n}\left(1-\frac{n}x\right)\ge d(1-\log x),
\end{equation*}
 thus as in \cite[pg 18  line 11  ]{LamLiSound} we have
 \begin{equation*}
\left(\left(1-\frac1{\sqrt{x}}\right)^{-2} \left(\frac{1}{\log x}+\frac{2}{\sqrt{x}\log^2x}\right)-\frac1{\log x}\right)\tRe\sum_{n\le x}\frac{a_f(n)\Lambda(n)}{n}\left(1-\frac{n}x\right)\ge -\frac{2d}{\sqrt{x}}.
 \end{equation*}

Using this in \eqref{prelimlowbound}  we have 
\begin{multline*}
\log |L(1,f)|\ge \tRe\sum_{n\le  x}a_f(n)\Lambda(n)\left(\frac1{n\log n}-\frac1{x\log x}\right)-\frac{\log C(f)}{(\sqrt{x}-1)\log x}\left(1+\frac{1+1/\sqrt{x}}{\log x}\right) -\frac{2d}{\sqrt{x}}-\frac{2.05d}{(\sqrt{x}-1)^2}\\
\ge \tRe\sum_{n\le  x}a_f(n)\Lambda(n)\left(\frac1{n\log n}-\frac1{x\log x}\right)-\frac{\log C(f)}{(\sqrt{x}-1)\log x}\left(1+\frac{1+1/\sqrt{x}}{\log x}\right) -\frac{9d}{4\sqrt{x}}.
\end{multline*}
%
 We apply Lemma \ref{generalizedLLS5.1}, thus guaranteeing that the first term in the right hand side is smallest when $a_f(p)=-d$ for every prime $p\le x$. Therefore, we have 
\begin{multline*}
\log |L(1,f)|\ge d\sum_{p^k\le  x}\Lambda(p^k)(-1)^k\left(\frac1{p^k\log p^k}-\frac1{x\log x}\right)-\frac{\log C(f)}{(\sqrt{x}-1)\log x}\left(1+\frac{1+1/\sqrt{x}}{\log x}\right) -\frac{9d}{4\sqrt{x}}.
\end{multline*}
Following the discussion after \cite[Equation 5.3]{LamLiSound} we see that 
\begin{multline*}
\log |L(1,f)|\ge d\left(-\log\log x-\gamma+\log\zeta(2)+\frac1{\log x}-\frac{8}{5\sqrt{x}}\right)-\frac{\log C(f)}{(\sqrt{x}-1)\log x}\left(1+\frac{1+1/\sqrt{x}}{\log x}\right)-\frac{9d}{4\sqrt{x}}.
\end{multline*}
Our choice of $x=\frac{\log^2C(f)}{4d^2}$ gives $-\log C(f)\ge -2d\sqrt{x}-1$ so that with a little calculation one obtains 
\begin{equation*}
\log |L(1,f)|\ge d\left(-\log\log x-\gamma+\log\zeta(2)-\frac1{\log x}-\frac2{\log^2x}-\frac{9}{2\sqrt{x}}\right).
\end{equation*}
Exponentiating both sides gives
\begin{align*}
\frac1{|L(1,f)|}&\le \left(e^{\g}\frac{6}{\pi^2}\right)^d\log^dx\exp\left(\frac{d}{\log x}+\frac{2d}{\log^2x}+\frac{9d}{2\sqrt{x}}\right)\\
&\le \left(e^{\g}\frac{6}{\pi^2}\right)^d\log^dx\left(1+\frac{d}{\log x}+\frac{dJ_1(d)}{\log^2x}+\frac{dJ_2(d)}{2\sqrt{x}}\right), 
\end{align*}
where
 \[J_1(d)=2+\left(1+\frac4{\log x}+\frac4{\log^2x}\right)\sum_{k=0}^{\infty}\frac{d^{k+1}}{(k+2)!}\left(\frac{1}{\log x}+\frac{2}{\log^2x}+\frac{9}{2\sqrt{x}}\right)^k,\] 
 and
  \[J_2(d)=9+\left(\frac{18}{\log x}+\frac{18}{\log^2 x}+\frac{81}{2\sqrt{x}}\right)\sum_{k=0}^{\infty}\frac{d^{k+1}}{(k+2)!}\left(\frac{1}{\log x}+\frac{2}{\log^2x}+\frac{9}{2\sqrt{x}}\right)^k.\]
  Replacing $x$ we get 
\begin{align*}
 \frac1{|L(1,f)|}&\le \left(2e^{\g}\frac{6}{\pi^2}\right)^d\left((\log\log C(f)-\log2d)^d+\frac{d}2(\log\log C(f)-\log2d)^{d-1}\right.\\
 &\left.+\frac{dJ_1(d)}4(\log\log C(f)-\log2d)^{d-2}+\frac{d^2J_2(d)(\log\log C(f)-\log2d)^d}{\log C(f)}\right)
\end{align*}
Thus the theorem is proven.
\section{Acknowledgements}
The author would like to thank Dr. Lamzouri for his guidance on this project.

\end{document}